\newcommand{\RR}{\mathbb{R}}
\newcommand{\ZZ}{\mathbb{Z}}
\newcommand{\st}{{\mbox{s.t.}}}
\newcommand{\T}{{\mathsf{T}}}
\newcommand{\supp}{\operatorname{supp}}
\newcommand{\F}{\mathscr{F}}
\tikzstyle{integral}=[line width = 1.5 mm, dash pattern=on 6mm off 2pt, red]
\tikzstyle{half integral}=[line width = 1.5 mm, dash pattern=on 2mm off 3pt, blue]
\tikzstyle{two thirds}=[line width = 1.5 mm, dash pattern = on 6mm off 3pt on 2mm off 3pt, violet]
\tikzstyle{one third}=[line width = 1.5mm, dash pattern = on 2mm off 8pt, green]
\tikzstyle{integral thin}=[line width = 0.5 mm, dash pattern=on 6mm off 2pt, red]
\tikzstyle{half integral thin}=[line width = 0.5 mm, dash pattern=on 2mm off 3pt, blue]
\tikzstyle{three fifths}=[line width = 1.5 mm, dash pattern = on 1mm off 7pt on 4mm off 3pt, orange]
\tikzstyle{two fifths}=[line width = 1.5 mm, dash pattern = on 8mm off 6pt, pink]
\begin{document}
\title{Revisiting a Cutting Plane Method for Perfect Matchings\thanks{Supported by the Fields Institute for Research in Mathematical Sciences through the
2019 Fields Undergraduate Summer Research Program.}}
%
%\titlerunning{Abbreviated paper title}
% If the paper title is too long for the running head, you can set
% an abbreviated paper title here
%
\author{Amber Q. Chen\inst{1}\orcidID{0000-0002-6032-6120} \and
Kevin K. H. Cheung\inst{2}\orcidID{0000-0002-4287-1766} \and
P. Michael Kielstra\inst{3}\orcidID{0000-0002-3854-7146} \and
Andrew Winn\inst{4}\orcidID{0000-0003-1242-8285}}
\authorrunning{A. Chen et al.}
% First names are abbreviated in the running head.
% If there are more than two authors, 'et al.' is used.
%
\institute{University of Toronto, 40 St. George Street, Toronto, ON M5S 2E4, Canada 
\email{qingyuan.chen@mail.utoronto.ca} \and
Carleton University, 1125 Colonel By Dr, Ottawa, ON K1S 5B6, Canada
\email{kcheung@math.carleton.ca}\and
Harvard University, 1 Oxford Street, Cambridge, MA 02138, USA
\email{pmkielstra@college.harvard.edu}
\and
Texas A\&M University, Mailstop 3368, College Station, TX 77843–3368,  USA\\
\email{andrewwinn3@tamu.edu}}
\maketitle              % typeset the header of the contribution
\begin{abstract}
	In 2016, Chandrasekaran, Végh, and Vempala published a method to solve the 
	minimum-cost perfect matching problem on an arbitrary graph by solving a 
	strictly polynomial number of linear programs.  However, their method 
	requires a strong uniqueness condition, which they imposed by using 
	perturbations of the form $c(i)=c_0(i)+2^{-i}$.  On large graphs 
	(roughly $m>100$), these perturbations lead to cost values that exceed the 
	precision of floating-point formats used by typical linear programming 
	solvers for numerical calculations.  We demonstrate, by a sequence of 
	counterexamples, that perturbations are required for the algorithm to work, 
	motivating our formulation of a general method that arrives at the same 
	solution to the problem as Chandrasekaran \textit{et al}. but overcomes the 
	limitations described above by solving multiple linear programs without 
	using perturbations.  We then give an explicit algorithm that exploits are 
	method, and show that this new algorithm still runs in strongly polynomial 
	time.
\end{abstract}

\keywords{Perfect matching \and Uniqueness \and Perturbation \and Linear 
Programming \and Cutting plane}

\section{Introduction}

Given a graph $G=(V, E)$ with edge cost function $c$, the minimum-cost
(or minimum-weight) perfect matching problem is to find a perfect matching
$E' \subseteq E$ (a subset such that every vertex $v \in V$ is covered
by exactly one $uv \in E'$) so that the sum of the costs of $E'$ is minimized.
As mentioned in \cite{cook_rohe_1999}, the minimum-cost perfect matching 
problem is a classical problem in combinatorial optimization with numerous and 
varied applications.

Since Edmonds \cite{edmonds1965a} introduced the blossom algorithm (a 
polynomial-time combinatorial method of solving the problem), a number of 
efficient
implementations have been developed over the years, with Kolmogorov's Blossom 
V~\cite{kolmogorov_blossom_2009} being a recent notable version.

The problem can also be formulated as a binary integer program:
\begin{align*}
\min \sum_{e \in E}& c(e) x(e) \\
\st \sum_{uv \in E} x(uv) &= 1 & \forall~v \in V \\
x(e) &\in \{0,1\} &\forall~e \in E.
\end{align*}
To use linear programming (LP) techniques to solve the problem, the constraints
$x(e) \in \{0,1\}$ are first relaxed to $x(e) \in [0, 1]$ and then to
$x(e) \geq 0$ since the upper bounds are then implied.  The linear program 
that results turns out to be exact for bipartite graphs in the sense that a 
basic optimal solution is the incidence vector of a minimum-weight perfect 
matching.  Edmonds \cite{edmonds1965b} provides an LP formulation for 
non-bipartite graphs that has the same property.  It requires the addition of 
``blossom inequalities":
\begin{align*}
\min \sum_{e \in E}& c(e) x(e) \\
\st \sum_{uv \in E} x(uv) &= 1 &\forall~v \in V \\
\sum_{\substack{uv \in E \\ u \in S, v\notin S}} x(uv) &\geq 1, &\forall 
S\subseteq
V,~|S|\mbox{ odd},~ 3 \leq |S| \leq |V|-3 \\
x(e) &\geq 0 &\forall~e \in E.
\end{align*}
Unfortunately, the presence of an exponential number of constraints in
this formulation precludes polynomial-time solvability via a generic LP 
solver.  As a result, researchers in the past have  experimented with a 
cutting-plane approach, solving the relaxation first without the blossom 
inequalities, then iteratively finding and adding violated inequalities until the problem 
has an integral solution.  A polynomial-time (though impractical) algorithm 
follows using the equivalence of separation and optimization via the 
ellipsoid method (see Grötschel \textit{et al.}~\cite{GLS}) and the 
polynomial-time identification of violated blossom inequalities due to Padberg 
and Rao~\cite{padberg_rao_1982}.  The existence of a practical LP-based cutting 
plane method for the minimum-weight perfect matching remained uncertain until 
2016, when Chandrasekaran~\textit{et al.}~\cite{chandrasekaran_cutting_2016} 
gave a cutting-plane algorithm which uses only a polynomial number of linear 
programs.

Their approach involves carefully selecting the blossom inequalities to be
included at each iteration and requires that the optimal solution to the linear
program be unique. As this uniqueness property does not always hold in
general, their method introduces an edge ordering and a perturbation on the 
edge costs. (The edge costs are assumed to be integers.) In particular, if 
$c_0(i)$ is the original cost for the $i$-th edge, then the perturbed cost is 
$c(i)=c_0(i)+2^{-i}$.  Such a perturbation turns out to be sufficient for 
providing the required uniqueness property.  Even though the increase in 
size in representing the perturbed costs is polynomial, when the graph is large 
(say with hundreds of edges), the precision required to represent the 
perturbed costs exceeds what is typical of the floating-point formats used by 
most LP solvers \cite{gunluk_exact_2011}. (For example, $4+2^{-100}= 
\frac{5070602400912917605986812821505}{1267650600228229401496703205376}$ requires a mantissa of over 100 bits.)

To overcome the potential numerical difficulties caused by 
perturbation, we present a variant of the algorithm which does not require
an explicit perturbation to ensure uniqueness. It works instead by solving a
sequence of linear programs for each single linear program that the original
algorithm would solve.  We present a method whereby, given the solutions to 
these programs, we can derive the optimal solution to a hypothetical perturbed 
linear program without any explicit calculations on perturbed costs.  After 
this, the rest of the proof follows just as it did for the original algorithm.

The trade-off is that our algorithm has a worse runtime than that of 
Chandrasekaran \textit{et al.}  Theirs requires solving $O(n \log n)$ linear 
programs, while ours solves $O(mn \log n)$.  This is, however, still 
polynomial.  % Since the current best implementations of the Edmonds blossom 
% algorithm run faster than the Chandrasekaran algorithm anyway (see 
% \cite{kolmogorov_blossom_2009}), this is not a serious problem since we would 
% not use a linear-programming algorithm if asymptotic runtime were the major 
% concern.

The rest of this paper is organized as follows.  After defining some terms 
(Section~\ref{sec:prelim}) and 
summarizing the algorithm from \cite{chandrasekaran_cutting_2016} 
(Section~\ref{sec:cvv}), we give 
examples of graphs which show that this algorithm requires some form of perturbation in both the primal and dual problems.  
In particular, without perturbing the edge costs, we cannot guarantee that the 
intermediate solutions will always be half-integral (Section~\ref{NonHalf})
or that the algorithm will terminate (Section~\ref{sec:cycling}).  
This occurs even if we force the primal solution to be the same as it 
would have been with perturbations.  This motivates our new method, which uses 
multiple linear programs to accurately emulate the perturbations.  We first explain this in a general case 
(Section~\ref{sec:perturb}) and then apply it to the specific problem of finding
perfect matchings (Section~\ref{sec:newalg}).

\section{Notation and definitions}\label{sec:prelim}
The set of $m \times n$ matrices with real entries is denoted by $\RR^{m\times
n}$.

For a matrix $A\in\RR^{m\times n}$, $A_{i,j}$ denotes the $(i,j)$-entry
of $A$; that is, the entry of $A$ at the intersection of the $i$-th row and the
$j$-th column.  $A_{:,j}$ denotes the $j$-th column of $A$ and
$A_{i,:}$ the $i$-th row. The transpose of $A$ is denoted by $A^\T$.

Following common usage in combinatorics, for a finite set $E$, $\RR^E$ denotes
the set of tuples of real numbers indexed by elements of $E$.  For $y \in
\RR^E$, $y(i)$ denotes the entry indexed by $i \in E$.  For a positive integer
$n$, $\RR^n$ is an abbreviation for $\RR^{\{1,\ldots,n\}}.$ Depending on the
context, elements of $\RR^n$ are treated as if they were elements of
$\RR^{n\times 1}$.

We assume familiarity with basic terminology related to matchings and linear 
programming.  A refresher of the former can be found at \cite[Chapter 
5]{cook_combinatorial_1998}, and of the latter at 
\cite{schrijver_theory_2000}.  We next recall some definitions in 
Chandrasekaran \textit{et al.}~\cite{chandrasekaran_cutting_2016} to facilitate 
discussion of their minimum-cost perfect matching algorithm.

Let $G=(V,E)$ be an simple undirected graph with integer edge costs given by
$c \in \mathbb{Z}^E$. A family $\F$ of subsets of $V$ is said to be
\textit{laminar} if for all $U,W \in \F$, $U \cap W = \emptyset$ or $U
\subseteq W$ or $W \subseteq U$.  For a set $S \subseteq V$,
$\delta(S)$ denotes the set of edges incident to one vertex in $S$ and one
vertex not in $S$. For a vertex $u$, $\delta(u)$ denotes $\delta(\{u\})$.  For
$x \in \RR^E$ and $T \subseteq E$, $x(T)$ denotes the sum $\sum_{e \in T}
x(e)$.

Let $M$ be a matching of a graph $H = (V, E)$. Let $U \subseteq V$, and let $\F$ be a laminar family
of subsets of $V$. Then $M$ is a \textit{$(U, \F)$-perfect-matching} if
${|\delta(S) \cap M|} \leq 1$ for every $S \in \F$ and $M$ covers exactly the 
vertex set $U$.  A set of vertices $S \in \F$ is said to be \textit{$(H, 
\F)$-factor-critical} for a graph $H$ if, for every $u \in S$, there exists an 
$(S \setminus \{u\}, \F)$-perfect-matching using the edges of $H$.

For a laminar family $\F$ of odd subsets of $V$, define the 
following primal-dual pair of linear programming problems:

\begin{align*}
\min \sum_{uv\in E}& c(uv) x(uv)\tag{$P_\F(G, c)$}\label{Pf}\\
\st\ x(\delta(u))&=1&\forall u\in V\\
x(\delta(S))&\ge 1& \forall S\in {\F}\\
x&\ge0,\\
\\
\max \sum_{S\in V \cup \F}&\Pi(S)\tag{$D_\F(G, c)$}\label{Df}\\
\st\ \sum_{S\in V \cup \F:uv\in \delta(S)} \Pi(S)&\le c(uv) & \forall uv\in E \\
\Pi(S)&\ge0&\forall S\in \F.
\end{align*}

Let $\Pi$ be a feasible solution to \ref{Df}.
$G_\Pi$ denotes the graph $(V,E_\Pi)$ where
$E_\Pi = \{ uv \in E : 
    \sum_{S\in V \cup \F:uv\in \delta(S)} \Pi(S) = c(uv)\}$.  Colloquially,
$E_\Pi$ is the set of ``tight" edges with respect to $\Pi$.  We say that $\Pi$
is an \textit{$\F$-critical dual} if every $S \in \F$ is $(G_\Pi,
\F)$-factor-critical and $\Pi(T) > 0$ for every non-maximal $T \in \F$.  If
$\Pi$ is an $\F$-critical dual except that some sets $S \in \F$ for which
$\Pi(S)=0$ may not be $(G_\Pi, \F)$-factor-critical, we say that $\Pi$ is an
$\F$-positively-critical dual.

Finally, we define a metric on solutions to \ref{Df} 
$$\Delta(\Gamma, \Pi)=\sum_{S \in V \cup\mathscr{F}} \frac{1}{|S|} 
|\Gamma(S)-\Pi(S)|.$$  It can be easily verified that this has the properties of a metric.

For a given fixed $\Gamma$, we say that $\Pi$ is \textit{$\Gamma$-extremal} if 
it minimizes $\Delta(\Gamma, \Pi)$.  Given $\Gamma$ and a primal solution $x$, 
we may find a  $\Gamma$-extremal dual optimal solution by solving the following 
linear program \cite[Section 5]{chandrasekaran_cutting_2016}:

\begin{align*}
\min \sum_{S \in V \cup \mathscr{F}}&\frac{1}{|S|}r(S)\tag{$D^*_\F(G, 
c)$}\label{D*}\\
\st\ r(S)+\Pi(S)&\geq\Gamma(S)&\forall S \in V \cup \F_x\\
-r(S)+\Pi(S)&\leq\Gamma(S)&\forall S \in V \cup \F_x\\
\sum_{uv \in \delta(S)}\Pi(S)&=c(uv)&\forall uv \in \supp(x)\\
\sum_{uv \in \delta(S)}\Pi(S)&\leq c(uv)&\forall uv \notin \supp(x)\\
\Pi(S)&\geq0& \forall S \in \mathscr{F}_x\\
\Pi(S)&=0&\forall S \in \F \setminus \F_x\\
r(s)&=0&\forall S \in \F \setminus \F_x,
\end{align*}

where $\F_x=\{S \in \F : x(\delta(S))=1\}$.  The solution will give us values 
for $r$ and $\Pi$; we ignore $r$ and take $\Pi$ to be our $\Gamma$-extremal 
solution.

\section{The Chandrasekaran-Végh-Vempala algorithm}\label{sec:cvv}
Algorithm~\ref{alg:cvv} for finding a minimum-cost perfect matching on $G$
is due to Chandrasekaran \textit{et al.}~\cite{chandrasekaran_cutting_2016}.  
It assumes, as we will from now on, that the edge costs are integers.

\begin{algorithm}[H]\caption{C-P-Matching Algorithm}\label{alg:cvv}
	\KwIn{A graph $G=(V, E)$ with edge costs $c \in \ZZ^E$.}
	\KwOut{A binary vector $x$ representing a minimum-cost perfect matching on 
	$G$.}
	Let $c$ be the cost function on the edges after perturbation (i.e., after 
	ordering the edges arbitrarily and increasing the cost of each edge $i$ by 
	$2^{-i}$). \label{cvv:OrderEdges}
	
	$\F \leftarrow \emptyset$, $\Gamma \leftarrow 0$
	
	\While{$x$ is not integral}{
		Find an optimal solution $x$ to \ref{Pf}.\label{cvv:primalstep}
		
		Find a $\Gamma$-extremal dual optimal solution $\Pi$ to \ref{Df} 
		(possibly by solving \ref{D*}). 
		\label{cvv:extremal}
		
		$\mathscr{H}'\leftarrow\{S\in {\F}: \Pi(S)>0\}$ \label{cvv:H'}
		
		Let $\mathscr{C}$ denote the set of odd cycles in $\supp(x)$. For each 
		$C\in \mathscr{C}$, define $\hat C$ as the union of $V(C)$ and the	
		maximal sets of $\mathscr{H}'$ intersecting it.
		
		$\mathscr{H}''\leftarrow \{\hat C: C\in \mathscr{C}\}$
		
		$\mathscr{F} \leftarrow \mathscr{H}'\cup \mathscr{H}''$, $\Gamma 
		\leftarrow \Pi$
	}
	\KwRet{$x$}
\end{algorithm}

The authors of the algorithm showed that $\mathscr{F}$ is always a laminar 
family and that the algorithm terminates after $O(n \log n)$ iterations, 
assuming that \ref{Pf} has a unique optimal solution in every
iteration of the algorithm.  This is ensured through the use of perturbations
in the first step.  The authors further demonstrate that a $\Gamma$-extremal 
dual solution, with an $\F$-critical $\Gamma$, is an $\F$-positively-critical 
dual optimal to \ref{Df}, so the result of step~\ref{cvv:extremal} is 
$\F$-positively-critical.  When combined with the uniquness assumption, this 
leads to $x$ being half-integral in each iteration.

The choice of using powers of $\frac{1}{2}$ for the perturbations is to keep
the increases in input size polynomial. However, to guarantee uniqueness, 
powers of a sufficiently small $\epsilon > 0$ can be used instead.

\begin{lemma}\label{cvvepsilon}
There exists a $\delta>0$ such that the perturbations used in Algorithm 
\ref{alg:cvv} may be replaced with powers of $\epsilon$ for any 
$\delta>\epsilon>0$.
\end{lemma}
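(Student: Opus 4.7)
The plan is to reduce the lemma to the statement that finitely many polynomial inequalities in $\epsilon$ hold simultaneously. The only role played by the perturbation in Algorithm~\ref{alg:cvv} is to guarantee that \ref{Pf} has a unique optimal solution at every iteration, so it suffices to choose $\epsilon$ small enough that this uniqueness property holds for every instance of \ref{Pf} the algorithm could conceivably encounter.

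First I would observe that the feasible region of \ref{Pf} depends only on $G$ and $\F$, not on the edge costs. Since $\F$ is a laminar family of odd subsets of $V$ and $V$ is finite, there are only finitely many candidate families $\F$; and for each such $\F$, the polyhedron defining \ref{Pf} has a finite set of basic feasible solutions. Let $\mathcal{X}$ be the union of these sets over all laminar families of odd subsets of $V$; this is a finite set of rational vectors determined by $G$ alone, and it contains every BFS that could arise during the algorithm regardless of $\epsilon$. Uniqueness of the perturbed primal optimum in every possible iteration of the algorithm is then implied by the single condition that $c(x)\neq c(y)$ for every pair of distinct $x,y \in \mathcal{X}$. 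Writing $c(i)=c_0(i)+\epsilon^i$, the cost difference
\[
q_{x,y}(\epsilon) \;:=\; \sum_{i}\bigl(c_0(i)+\epsilon^{\,i}\bigr)\bigl(x(i)-y(i)\bigr)
\]
is a polynomial in $\epsilon$ whose coefficient of $\epsilon^i$ for $i\geq 1$ is $x(i)-y(i)$. Since $x\neq y$, at least one of these coefficients is nonzero, so $q_{x,y}$ is a nonzero polynomial and therefore has only finitely many real roots.

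Ranging over the finitely many pairs of distinct elements of $\mathcal{X}$ yields a finite set $B \subset \RR$ of ``bad'' values of $\epsilon$ at which the required non-vanishing could fail. Setting $\delta$ to be any positive real number that is strictly less than every positive element of $B$ (or simply $\delta := 1$ if $B$ contains no positive element) produces the desired constant: for every $\epsilon\in(0,\delta)$, every primal LP arising in Algorithm~\ref{alg:cvv} admits a unique optimum, and the correctness argument of Chandrasekaran \textit{et al.}~\cite{chandrasekaran_cutting_2016} then carries through verbatim with $2^{-i}$ replaced by $\epsilon^i$. I do not foresee a difficult step; the only mildly subtle point is the invariance of the feasible region under changes of the cost vector, which is exactly what lets me treat the coefficients of $q_{x,y}$ as constants in $\epsilon$ and thereby reduce the entire question to an elementary polynomial root count.
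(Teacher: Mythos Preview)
Your proof is correct and follows essentially the same strategy as the paper's: reduce uniqueness to the non-vanishing of finitely many polynomials in $\epsilon$ arising from pairs of basic feasible solutions, and then take $\delta$ below every positive root. Your version is in fact slightly cleaner, since you explicitly quantify over all laminar families $\F$ (the paper's proof fixes a single $\F$ and leaves that union implicit) and argue directly with the cost-difference polynomial $q_{x,y}$ rather than via an intermediate identity about equal power sums.
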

\begin{proof}
	Consider the proof given for the efficacy of the $2^{-i}$ perturbation in 
	\cite[Section 7]{chandrasekaran_cutting_2016}.  This uses only one property 
	of the perturbation: that, if $\sum_{i=1}^m a(i) 2^{-i}=\sum_{k=1}^n 
	b(k) 2^{-k}$, with $a(i), b(k) > 0$, then $m = n$ 
	and $a(i)=b(i)$ for all $i$.  We prove this for a class of arbitrary $\epsilon > 0$, after which the 
desired result follows.
	
	Assume $\sum_{i=1}^m a(i)\epsilon^i = \sum_{k=1}^n b(k) \epsilon^k$.  
	Assume 	further, without loss of generality, that $m \leq n$.  Then 
	$\sum_{i=1}^m (a(i)-b(i))\epsilon^i - \sum_{k=m+1}^n b(k) \epsilon^k = 0$.
	
	Take $m < n$.  For $\epsilon$ sufficiently small, either $a(i)=b(i)$ for 
	all $i \in \{1, \ldots, m\}$ or $\sum_{i=1}^m |(a(i)-b(i))|\epsilon^i >  
	\sum_{k=m+1}^n b(k) \epsilon^k$.  In the first case, $ \sum_{k=m+1}^n b(k) 
	\epsilon^k = 0$, a contradiction since $\epsilon$ and all $b(k)$ are 
	positive; in the second, $\sum_{i=1}^m (a(i)-b(i))\epsilon^i - 
	\sum_{k=m+1}^n b(k) \epsilon^k \neq 0$.  Therefore $m = n$.
	
	Assume there exists a minimal $l$ such that $a(l)-b(l) \neq 0$.  Then 
	\[
	0=\sum_{i=1}^m(a(i)-b(i))\epsilon^i = (a(l)-b(l))\epsilon^l + 
	\epsilon^{l+1}\sum_{i=l+1}^n(a(i)-b(i))\epsilon^{i-l-1}.
	\]
	For sufficiently small $\epsilon$, $|(a(l)-b(l))\epsilon^l| > 
	|\epsilon^{l+1}\sum_{i=l+1}^n(a(i)-b(i))\epsilon^{i-l-1}|$, so 
	$(a(l)-b(l))\epsilon^l + \epsilon^{l+1} \sum_{i=l+1}^n(a(i)-b(i)) 
	\epsilon^{i-l-1} \neq 0$.
	
	This shows that, for any given $a$ and $b$, there exists a $\delta$ such 
	that if $\sum a(i)\delta^i = \sum b(k)\delta^k,$ then $a=b$ for all 
	$\delta > \epsilon > 0$.  In fact, we need only consider the cases where 
	$a$ and $b$ are basic feasible solutions to \ref{Pf}, because if there 
	exists an optimal solution that is not a basic feasible solution then there 
	exist two distinct basic feasible solutions that are optimal.  Therefore, 
	if optimal basic feasible solutions are unique, so are optimal solutions in 
	general.
	
	Fix $\F$.  Then, because \ref{Pf} is bounded and finite-dimensional, it 
	has a finite number of basic feasible solutions $s_1, \dots, s_k$.  Every 
	pair $(s_p, s_q)$ gives us a $\delta$ by setting $a=s_p$, $b=s_q$ and 
	running through the logic above.  Take the smallest of these $\delta$s to 
	complete the proof.
\end{proof}

For reasons that will become clear later (see Section~\ref{sec:perturb}),
it will be more convenient to use powers of a sufficiently small $\epsilon > 0$
as perturbations instead of powers of $\frac{1}{2}$.  In any case, increasing 
the bit-length required to represent the edge costs can lead to practical 
computation challenges since most LP solvers employ fixed-length floating-point 
formats.  (Notable exceptions exist, such as 
QSopt-Exact~\cite{ApplegateDavidL.2007Estl} and the SoPlex rational 
solver~\cite{GleixnerAmbrosM.2016IRfL}, but they are significantly slower than 
non-exact solvers.)  We feel strongly that the key to a successful 
implementation of Algorithm~\ref{alg:cvv} is to not work with any explicit 
numerical perturbation.

An obvious way of modifying the algorithm is simply to not perturb the edge
costs and run the rest of the procedure as stated, but this violates the uniqueness assumption, and as easily demonstrated in \cite[Section 1]{chandrasekaran_cutting_2016}, can lead to non-half-integrality and cycling.

Instead, we may emulate perturbations by ordering the edges (as in step
\ref{cvv:OrderEdges} of Algorithm~\ref{alg:cvv}) and then finding a
lexicographically-minimal optimal solution to \ref{Pf}, where $c$ is now an
unperturbed cost function. This may be accomplished using Algorithm
\ref{alg:LexMinPrimal}, which shows the process in a more general case.

\begin{algorithm}[H]
	\caption{Lexicographically-Minimal Primal Algorithm}\label{alg:LexMinPrimal}
	\KwIn{A linear program $P$ of the form $\min c^\T x\ \st\ Ax \geq b$, where 
	$x \in \mathbb{R}^n$.}
	\KwOut{The lexicographically-minimal solution $x$ to $P$.}
	Solve $P$ and let its opimal value be $\gamma$.
	
	$K \leftarrow \emptyset$, $x \leftarrow 0$
	
	\For{$i \leftarrow 1$ \KwTo $n$}{
		Set $x$ to an optimal solution to
        \abovedisplayskip=0pt
		\belowdisplayskip=0pt
		\begin{align}
		\min \ & x_i\notag \\
		\st\ c^\T x &= \gamma \notag \\
		x_j &=z & \forall (j, z) \in K \notag\\
		Ax&\geq b. \notag
		\end{align}
		
		$K \leftarrow K \cup \{(i, x_i)\}$
	}

	\KwRet{$x$}
\end{algorithm}

By \cite[p. 138]{schrijver_theory_2000}, the 
lexicographically-minimal optimal solution to \ref{Pf} is the same 
as the optimal solution to the perturbed \ref{Pf}.  
Unfortunately, this on its own ensures neither half-integrality nor 
convergence: for an arbitrarily small nonzero value there exist graphs such 
that the lexicographically-minimal optimal solution to \ref{Pf} contains 
smaller values, and there exist graphs on which this modification of the 
algorithm enters an infinite loop.  Before giving a slightly more complex 
modification of the algorithm that uses a multi-stage approach to mimic solving 
with perturbation without actually working with perturbations, we first give 
some examples of graphs that demonstrate the problems just mentioned.

\section{Non-half-integral solution}\label{NonHalf}
The following example, which we call the ``dancing robot," shows that, if
the edge costs are not perturbed at all, having an $\F$-critical dual is not 
sufficient to guarantee that all lexicographically-minimal optimal primal 
solutions are half-integral. Chandrasekaran \textit{et 
al.}~\cite{chandrasekaran_cutting_2016} provide an example early in their paper 
of a graph on which their algorithm as written does not maintain 
half-integrality, but this does not entirely suffice for our purposes, as the 
lexicographically-minimal primal solution on this graph, for any edge ordering, 
is integral.

The graph shown in Figures \ref{DRPerfect}-\ref{DR3}, with all edges having 
cost 1, eventually gives non-half-integral values when run through the original 
algorithm without any perturbation while enforcing a lexicographically-minimal 
optimal primal. During each iteration, an optimal dual solution is given by 
$\Pi$, the vector having value $\frac{1}{2}$ on the entries indexed by the 
vertices and $0$ on entries indexed by the sets in $\F$.  Note that all edges 
in the graph are tight with respect to $\Pi$. We can see that, although the 
primal solutions in the first and second iterations are half-integral 
(shown in Figures~\ref{DR1} and~\ref{DR2}), the solution in the third iteration 
no longer is.  The $\frac{1}{3}$- and $\frac{2}{3}$-edges are shown in 
Figure~\ref{DR3}.

Meanwhile, the dual solution $\Pi$ is a positively-critical
optimal dual for the current $\F$ in every iteration, as well as a critical 
dual for the next $\F$. For instance, the $\Pi$ from the second iteration, 
feasible to the dual problems from both the second and third iterations, is 
trivially an $\F$-positively-critical optimal dual for the second iteration, 
since none of the sets $S \in \F$ have positive dual value. For the third 
iteration, since there exists an $(S\setminus\{u\}, \F)$-perfect-matching for 
any node $u \in S \in \F$, and since $\F$ only has maximal sets, that same 
$\Pi$ is an $\F$-critical dual.

\begin{figure}
\begin{minipage}{0.84\textwidth}
\begin{minipage}{0.5\textwidth}
\begin{center}
\resizebox{\textwidth}{!}{
\begin{tikzpicture}[thick, yscale=0.6]
\tikzset{
  font={\fontsize{18pt}{12}\selectfont}}
\coordinate (0) at (-3.46, 2);
\coordinate (1) at (0, 4);
\coordinate (2) at (6, -5);
\coordinate (3) at (-7.46, 2);
\coordinate (4) at (0, -4);
\coordinate (5) at (3.46, 2);
\coordinate (6) at (6.5, -8);
\coordinate (7) at (-7.46, -2);
\coordinate (8) at (-3, -11);
\coordinate (9) at (-1, -12);
\coordinate (10) at (1, -12);
\coordinate (11) at (0, -8);
\coordinate (12) at (-3.46, -2);
\coordinate (13) at (3.46, -2);
\coordinate (14) at (3, -11);
\coordinate (15) at (6.92, 0);
 \draw (4) -- (12);
\draw[integral] (8) -- (9);
\draw (0) -- (12);
\draw[integral]  (0) -- (3);
\draw (0) -- (1);
\draw [integral] (1) -- (5);
\draw (2) -- (13);
\draw[integral] (2) -- (6);
\draw (3) -- (7);
\draw (4) -- (13);
\draw[integral] (4) -- (11);
\draw (5) -- (13);
\draw (5) -- (15);
\draw[integral] (7) -- (12);
\draw (8) -- (11);
\draw (9) -- (11);
\draw[integral] (10) -- (14);
\draw (10) -- (11);
\draw (11) -- (14);
\draw[integral] (13) -- (15);

\node at (0)[draw, thick, circle, fill=white, inner sep = 3.5pt]{0};
\node at (1)[draw, thick, circle, fill=white, inner sep = 3.5pt]{1};
\node at (2)[draw, thick, circle, fill=white, inner sep = 3.5pt]{2};
\node at (3)[draw, thick, circle, fill=white, inner sep = 3.5pt]{3};
\node at (4)[draw, thick, circle, fill=white, inner sep = 3.5pt]{4};
\node at (5)[draw, thick, circle, fill=white, inner sep = 3.5pt]{5};
\node at (6)[draw, thick, circle, fill=white, inner sep = 3.5pt]{6};
\node at (7)[draw, thick, circle, fill=white, inner sep = 3.5pt]{7};
\node at (8)[draw, thick, circle, fill=white, inner sep = 3.5pt]{8};
\node at (9)[draw, thick, circle, fill=white, inner sep = 3.5pt]{9};
\node at (10)[draw, thick, circle, fill=white, inner sep = 3.5pt]{10};
\node at (11)[draw, thick, circle, fill=white, inner sep = 3.5pt]{11};
\node at (12)[draw, thick, circle, fill=white, inner sep = 3.5pt]{12};
\node at (13)[draw, thick, circle, fill=white, inner sep = 3.5pt]{13};
\node at (14)[draw, thick, circle, fill=white, inner sep = 3.5pt]{14};
\node at (15)[draw, thick, circle, fill=white, inner sep = 3.5pt]{15};

% Legend
\draw [rounded corners = 4pt] (-7.4,-3.5) rectangle (-1.5, -9);
%\node at (-4.45, -4) {\large Legend of Primal Values};
\draw (-7, -4.5) -- (-3, -4.5) [integral];
\draw (-7, -5.5) -- (-3, -5.5) [half integral];
\draw (-7, -6.5) -- (-3, -6.5) [one third];
\draw (-7, -7.5) -- (-3, -7.5) [two thirds];
\node at (-2.25,-4.5){\Large 1};
\node at (-2.25,-5.5){\Large 1/2};
\node at (-2.25,-6.5){\Large 1/3};
\node at (-2.25,-7.5){\Large 2/3};
\node at (-4.45, -8.5) {\Large Zero otherwise};
\end{tikzpicture}}
\caption{Perfect Matching}
\label{Legend}
\label{DRPerfect}
\end{center}
\end{minipage}
\begin{minipage}{0.5\textwidth}
\begin{center}
\resizebox{\textwidth}{!}{
\begin{tikzpicture}[thick, yscale=0.6]
\tikzset{
  font={\fontsize{18pt}{12}\selectfont}}
\coordinate (0) at (-3.46, 2);
\coordinate (1) at (0, 4);
\coordinate (2) at (6, -5);
\coordinate (3) at (-7.46, 2);
\coordinate (4) at (0, -4);
\coordinate (5) at (3.46, 2);
\coordinate (6) at (6.5, -8);
\coordinate (7) at (-7.46, -2);
\coordinate (8) at (-3, -11);
\coordinate (9) at (-1, -12);
\coordinate (10) at (1, -12);
\coordinate (11) at (0, -8);
\coordinate (12) at (-3.46, -2);
\coordinate (13) at (3.46, -2);
\coordinate (14) at (3, -11);
\coordinate (15) at (6.92, 0);
\draw (4) -- (12)	[integral];
\draw (8) -- (9)	[integral];
\draw (0) -- (12);
\draw (0) -- (3);
\draw (0) -- (1)	[integral];
\draw (1) -- (5);
\draw (2) -- (13);
\draw (2) -- (6)	[integral];
\draw (3) -- (7)	[integral];
\draw (4) -- (13);
\draw (4) -- (11);
\draw (5) -- (13)	[half integral];
\draw (5) -- (15)	[half integral];
\draw (7) -- (12);
\draw (8) -- (11);
\draw (9) -- (11);
\draw (10) -- (14)	[half integral];
\draw (10) -- (11)	[half integral];
\draw (11) -- (14)	[half integral];
\draw (13) -- (15)	[half integral];

\node at (0)[draw, thick, circle, fill=white, inner sep = 3.5pt]{0};
\node at (1)[draw, thick, circle, fill=white, inner sep = 3.5pt]{1};
\node at (2)[draw, thick, circle, fill=white, inner sep = 3.5pt]{2};
\node at (3)[draw, thick, circle, fill=white, inner sep = 3.5pt]{3};
\node at (4)[draw, thick, circle, fill=white, inner sep = 3.5pt]{4};
\node at (5)[draw, thick, circle, fill=white, inner sep = 3.5pt]{5};
\node at (6)[draw, thick, circle, fill=white, inner sep = 3.5pt]{6};
\node at (7)[draw, thick, circle, fill=white, inner sep = 3.5pt]{7};
\node at (8)[draw, thick, circle, fill=white, inner sep = 3.5pt]{8};
\node at (9)[draw, thick, circle, fill=white, inner sep = 3.5pt]{9};
\node at (10)[draw, thick, circle, fill=white, inner sep = 3.5pt]{10};
\node at (11)[draw, thick, circle, fill=white, inner sep = 3.5pt]{11};
\node at (12)[draw, thick, circle, fill=white, inner sep = 3.5pt]{12};
\node at (13)[draw, thick, circle, fill=white, inner sep = 3.5pt]{13};
\node at (14)[draw, thick, circle, fill=white, inner sep = 3.5pt]{14};
\node at (15)[draw, thick, circle, fill=white, inner sep = 3.5pt]{15};
\end{tikzpicture}}
$\Pi_1(v)=\frac{1}{2}$  $\forall{v} \in \mathcal{V}$, $\F_1=\emptyset$
\caption{First Iteration}
\label{DR1}
\end{center}
\end{minipage}
\begin{minipage}{0.5\textwidth}
\begin{center}
\resizebox{\textwidth}{!}{
\begin{tikzpicture}[thick, yscale=0.6]
\tikzset{
  font={\fontsize{18pt}{12}\selectfont}}
\coordinate (0) at (-3.46, 2);
\coordinate (1) at (0, 4);
\coordinate (2) at (6, -5);
\coordinate (3) at (-7.46, 2);
\coordinate (4) at (0, -4);
\coordinate (5) at (3.46, 2);
\coordinate (6) at (6.5, -8);
\coordinate (7) at (-7.46, -2);
\coordinate (8) at (-3, -11);
\coordinate (9) at (-1, -12);
\coordinate (10) at (1, -12);
\coordinate (11) at (0, -8);
\coordinate (12) at (-3.46, -2);
\coordinate (13) at (3.46, -2);
\coordinate (14) at (3, -11);
\coordinate (15) at (6.92, 0);
\draw (4) -- (12)	[half integral];
\draw (8) -- (9)	[half integral];
\draw (0) -- (12)	[half integral];
\draw (0) -- (3);
\draw (0) -- (1) 	[half integral];
\draw (1) -- (5)	[half integral];
\draw (2) -- (13);
\draw (2) -- (6)	[integral];
\draw (3) -- (7) 	[integral];
\draw (4) -- (13)	[half integral];
\draw (4) -- (11);
\draw (5) -- (13);
\draw (5) -- (15)	[half integral];
\draw (7) -- (12);
\draw (8) -- (11)	[half integral];
\draw (9) -- (11)	[half integral];
\draw (10) -- (14)	[integral];
\draw (10) -- (11);
\draw (11) -- (14);
\draw (13) -- (15)	[half integral];

\node at (0)[draw, thick, circle, fill=white, inner sep = 3.5pt]{0};
\node at (1)[draw, thick, circle, fill=white, inner sep = 3.5pt]{1};
\node at (2)[draw, thick, circle, fill=white, inner sep = 3.5pt]{2};
\node at (3)[draw, thick, circle, fill=white, inner sep = 3.5pt]{3};
\node at (4)[draw, thick, circle, fill=white, inner sep = 3.5pt]{4};
\node at (5)[draw, thick, circle, fill=white, inner sep = 3.5pt]{5};
\node at (6)[draw, thick, circle, fill=white, inner sep = 3.5pt]{6};
\node at (7)[draw, thick, circle, fill=white, inner sep = 3.5pt]{7};
\node at (8)[draw, thick, circle, fill=white, inner sep = 3.5pt]{8};
\node at (9)[draw, thick, circle, fill=white, inner sep = 3.5pt]{9};
\node at (10)[draw, thick, circle, fill=white, inner sep = 3.5pt]{10};
\node at (11)[draw, thick, circle, fill=white, inner sep = 3.5pt]{11};
\node at (12)[draw, thick, circle, fill=white, inner sep = 3.5pt]{12};
\node at (13)[draw, thick, circle, fill=white, inner sep = 3.5pt]{13};
\node at (14)[draw, thick, circle, fill=white, inner sep = 3.5pt]{14};
\node at (15)[draw, thick, circle, fill=white, inner sep = 3.5pt]{15};
\end{tikzpicture}}
$\Pi_2(v)=\frac{1}{2}$  $\forall{v} \in \mathcal{V}$,
$\F_2 = \{\{5, 15, 13\}, \{10, 11, 14\}\}$, $\Pi_2(S)= 0 $  $\forall{S} \in \F_2$
\caption{Second Iteration}
\label{DR2}
\end{center}
\end{minipage}
\begin{minipage}{0.5\textwidth}
\begin{center}
\resizebox{\textwidth}{!}{
\begin{tikzpicture}[thick, yscale=0.6]
\tikzset{
  font={\fontsize{18pt}{12}\selectfont}}
\coordinate (0) at (-3.46, 2);
\coordinate (1) at (0, 4);
\coordinate (2) at (6, -5);
\coordinate (3) at (-7.46, 2);
\coordinate (4) at (0, -4);
\coordinate (5) at (3.46, 2);
\coordinate (6) at (6.5, -8);
\coordinate (7) at (-7.46, -2);
\coordinate (8) at (-3, -11);
\coordinate (9) at (-1, -12);
\coordinate (10) at (1, -12);
\coordinate (11) at (0, -8);
\coordinate (12) at (-3.46, -2);
\coordinate (13) at (3.46, -2);
\coordinate (14) at (3, -11);
\coordinate (15) at (6.92, 0);
\draw (4) -- (12)	[two thirds];
\draw (8) -- (9)	[integral];
\draw (0) -- (12);
\draw (0) -- (3)	[one third];
\draw (0) -- (1)	[two thirds];
\draw (1) -- (5)	[one third];
\draw (2) -- (13);
\draw (2) -- (6)	[integral];
\draw (3) -- (7)	[two thirds];
\draw (4) -- (13);
\draw (4) -- (11)	[one third];
\draw (5) -- (13)	[one third];
\draw (5) -- (15)	[one third];
\draw (7) -- (12)	[one third];
\draw (8) -- (11);
\draw (9) -- (11);
\draw (10) -- (14)	[two thirds];
\draw (10) -- (11)	[one third];
\draw (11) -- (14)	[one third];
\draw (13) -- (15)	[two thirds];

\node at (0)[draw, thick, circle, fill=white, inner sep = 3.5pt]{0};
\node at (1)[draw, thick, circle, fill=white, inner sep = 3.5pt]{1};
\node at (2)[draw, thick, circle, fill=white, inner sep = 3.5pt]{2};
\node at (3)[draw, thick, circle, fill=white, inner sep = 3.5pt]{3};
\node at (4)[draw, thick, circle, fill=white, inner sep = 3.5pt]{4};
\node at (5)[draw, thick, circle, fill=white, inner sep = 3.5pt]{5};
\node at (6)[draw, thick, circle, fill=white, inner sep = 3.5pt]{6};
\node at (7)[draw, thick, circle, fill=white, inner sep = 3.5pt]{7};
\node at (8)[draw, thick, circle, fill=white, inner sep = 3.5pt]{8};
\node at (9)[draw, thick, circle, fill=white, inner sep = 3.5pt]{9};
\node at (10)[draw, thick, circle, fill=white, inner sep = 3.5pt]{10};
\node at (11)[draw, thick, circle, fill=white, inner sep = 3.5pt]{11};
\node at (12)[draw, thick, circle, fill=white, inner sep = 3.5pt]{12};
\node at (13)[draw, thick, circle, fill=white, inner sep = 3.5pt]{13};
\node at (14)[draw, thick, circle, fill=white, inner sep = 3.5pt]{14};
\node at (15)[draw, thick, circle, fill=white, inner sep = 3.5pt]{15};
\end{tikzpicture}}
$\Pi_3(v)=\frac{1}{2}$  $\forall{v} \in \mathcal{V}$,
$\F_3 = \{\{0, 1, 5, 15, 13, 4, 12\}, \{8, 11, 9\}\}$, $\Pi_3(S)= 0 $  $\forall{S} \in \F_3$
\caption{Third Iteration}
\label{DR3}
\end{center}
\end{minipage}
\end{minipage}
\begin{minipage}[c]{0.15\textwidth}
\centering
\begin{flushright}
\textbf{Edge ordering:}\\
(1, 5)\\
(2, 13)\\
(10, 14)\\
(0, 3)\\
(4, 12)\\
(5, 13)\\
(7, 12)\\
(5, 15)\\
(3, 7)\\
(8, 9)\\
(0, 1)\\
(11, 14)\\
(0, 12)\\
(4, 13)\\
(2, 6)\\
(10, 11)\\
(9, 11)\\
(4, 11)\\
(8, 11)\\
(13, 15)\\
\end{flushright}
\end{minipage}
\end{figure}

Even worse, the algorithm will eventually enter into an infinite loop on this
example.  The details are tedious, so we will not go into them --- the example 
in the next section also loops but does not lose half-integrality.

On their own, then, a lexicographically-minimal primal and an $\F$-critical 
dual can guarantee neither half-integrality nor termination.  It is worth 
mentioning that we could expand the graph to get one with more 
non-half-integral edges by making the $2$-$6$ edge (the ``arm" of the dancing 
robot) overlap with another dancing robot's $6$-$2$ edge.  This combination 
would have twice as many non-half-integral edges, spread across twice as many 
non-half-integral paths, as the original dancing robot.  By combining multiple 
dancing robots in such a manner, we can get as many non-half-integral paths as 
we want, which indicates that we cannot avoid these non-half-integral edges via 
a simple combinatorial linear- or constant-time algorithm.

We can even alter the dancing robot in order to give us arbitrarily small but
nonzero values in a lexicographically-minimal optimal solution.  Say we want a 
primal solution $x$ such that, for some $uv$, $x(uv)=\frac{1}{2n+1}$ for some 
$n \in \ZZ_{\geq 0}$.  We add $2(n-1)$ new edges between $0$ and the $0$-$1$ 
edge, alternately without and with adjoining $4$-cycles. The next example shows 
how this works for $n=2$.

\begin{figure}
\begin{minipage}{0.84\textwidth}
\begin{minipage}{0.5\textwidth}
\begin{center}
\resizebox{\textwidth}{!}{
\begin{tikzpicture}[thick, yscale=0.6]
\tikzset{
  font={\fontsize{18pt}{12}\selectfont}}
\coordinate (0) at (-3.46, 2);
\coordinate (1) at (3, 5);
\coordinate (2) at (6, -5);
\coordinate (3) at (-7.46, 2);
\coordinate (4) at (0, -4);
\coordinate (5) at (3.46, 2);
\coordinate (6) at (6.5, -8);
\coordinate (7) at (-7.46, -2);
\coordinate (8) at (-3, -11);
\coordinate (9) at (-1, -12);
\coordinate (10) at (1, -12);
\coordinate (11) at (0, -8);
\coordinate (12) at (-3.46, -2);
\coordinate (13) at (3.46, -2);
\coordinate (14) at (3, -11);
\coordinate (15) at (6.92, 0);
\coordinate (16) at (-3, 5);
\coordinate (17) at (0, 6);
\coordinate (18) at (-3.8, 8);
\coordinate (19) at (-0.8, 9);

\draw (4) -- (12);
\draw (8) -- (9) [integral];
\draw (0) -- (12) [integral];
\draw (0) -- (3);
\draw (0) -- (16);
\draw (1) -- (5) [integral];
\draw (2) -- (13);
\draw (2) -- (6) [integral];
\draw (3) -- (7) [integral];
\draw (4) -- (13);
\draw (4) -- (11) [integral];
\draw (5) -- (13);
\draw (5) -- (15);
\draw (7) -- (12);
\draw (8) -- (11);
\draw (9) -- (11);
\draw (10) -- (14) [integral];
\draw (10) -- (11);
\draw (11) -- (14);
\draw (13) -- (15) [integral];
\draw (16) -- (17) [integral];
\draw (1) -- (17);
\draw (16) -- (18);
\draw (18) -- (19) [integral];
\draw (17) -- (19);

\foreach \n in {0,...,19}
	\node at (\n)[draw, thick, circle, fill=white, inner sep = 2.8pt]{\n};
% Legend
\draw [rounded corners = 4pt] (-8,-3.5) rectangle (-3.5, -11.25);
\draw (-7.5, -4.5) -- (-5, -4.5) [integral];
\draw (-7.5, -5.5) -- (-5, -5.5) [half integral];
\draw (-7.5, -6.5) -- (-5, -6.5) [one third];
\draw (-7.5, -7.5) -- (-5, -7.5) [two fifths];
\draw (-7.5, -8.5) -- (-5, -8.5) [three fifths];
\draw (-7.5, -9.5) -- (-5, -9.5) [two thirds];
\node at (-4.25,-4.5){\Large 1};
\node at (-4.25,-5.5){\Large 1/2};
\node at (-4.25,-6.5){\Large 1/5};
\node at (-4.25,-7.5){\Large 2/5};
\node at (-4.25,-8.5){\Large 3/5};
\node at (-4.25,-9.5){\Large 4/5};
\node at (-5.75,-10.5) {\Large Zero otherwise};
\end{tikzpicture}}
\caption{Perfect Matching}
\end{center}
\end{minipage}
\begin{minipage}{0.5\textwidth}
\begin{center}
\resizebox{\textwidth}{!}{
\begin{tikzpicture}[thick, yscale=0.6]
\tikzset{
  font={\fontsize{18pt}{12}\selectfont}}
\coordinate (0) at (-3.46, 2);
\coordinate (1) at (3, 5);
\coordinate (2) at (6, -5);
\coordinate (3) at (-7.46, 2);
\coordinate (4) at (0, -4);
\coordinate (5) at (3.46, 2);
\coordinate (6) at (6.5, -8);
\coordinate (7) at (-7.46, -2);
\coordinate (8) at (-3, -11);
\coordinate (9) at (-1, -12);
\coordinate (10) at (1, -12);
\coordinate (11) at (0, -8);
\coordinate (12) at (-3.46, -2);
\coordinate (13) at (3.46, -2);
\coordinate (14) at (3, -11);
\coordinate (15) at (6.92, 0);
\coordinate (16) at (-3, 5);
\coordinate (17) at (0, 6);
\coordinate (18) at (-3.8, 8);
\coordinate (19) at (-0.8, 9);

\draw (4) -- (12) [integral];
\draw (8) -- (9) [integral];
\draw (0) -- (12);
\draw (0) -- (3);
\draw (0) -- (16) [integral];
\draw (1) -- (5) ;
\draw (2) -- (13);
\draw (2) -- (6) [integral];
\draw (3) -- (7) [integral];
\draw (4) -- (13);
\draw (4) -- (11);
\draw (5) -- (13) [half integral];
\draw (5) -- (15) [half integral];
\draw (7) -- (12);
\draw (8) -- (11);
\draw (9) -- (11);
\draw (10) -- (14) [half integral];
\draw (10) -- (11) [half integral];
\draw (11) -- (14) [half integral];
\draw (13) -- (15) [half integral];
\draw (16) -- (17);
\draw (1) -- (17) [integral];
\draw (16) -- (18);
\draw (18) -- (19) [integral];
\draw (17) -- (19);

\foreach \n in {0,...,19}
	\node at (\n)[draw, thick, circle, fill=white, inner sep = 2.8pt]{\n};
\end{tikzpicture}}
\caption{First Iteration}
\end{center}
\end{minipage}
\begin{minipage}{0.5\textwidth}
\begin{center}
\resizebox{\textwidth}{!}{
\begin{tikzpicture}[thick, yscale=0.6]
\tikzset{
  font={\fontsize{18pt}{12}\selectfont}}
\coordinate (0) at (-3.46, 2);
\coordinate (1) at (3, 5);
\coordinate (2) at (6, -5);
\coordinate (3) at (-7.46, 2);
\coordinate (4) at (0, -4);
\coordinate (5) at (3.46, 2);
\coordinate (6) at (6.5, -8);
\coordinate (7) at (-7.46, -2);
\coordinate (8) at (-3, -11);
\coordinate (9) at (-1, -12);
\coordinate (10) at (1, -12);
\coordinate (11) at (0, -8);
\coordinate (12) at (-3.46, -2);
\coordinate (13) at (3.46, -2);
\coordinate (14) at (3, -11);
\coordinate (15) at (6.92, 0);
\coordinate (16) at (-3, 5);
\coordinate (17) at (0, 6);
\coordinate (18) at (-3.8, 8);
\coordinate (19) at (-0.8, 9);

\draw (4) -- (12) [half integral];
\draw (8) -- (9) [half integral];
\draw (0) -- (12) [half integral];
\draw (0) -- (3);
\draw (0) -- (16) [half integral];
\draw (1) -- (5)  [half integral];
\draw (2) -- (13);
\draw (2) -- (6) [integral];
\draw (3) -- (7) [integral];
\draw (4) -- (13) [half integral];
\draw (4) -- (11);
\draw (5) -- (13);
\draw (5) -- (15) [half integral];
\draw (7) -- (12);
\draw (8) -- (11) [half integral];
\draw (9) -- (11) [half integral];
\draw (10) -- (14) [integral];
\draw (10) -- (11);
\draw (11) -- (14);
\draw (13) -- (15) [half integral];
\draw (16) -- (17) [half integral];
\draw (1) -- (17) [half integral];
\draw (16) -- (18);
\draw (18) -- (19) [integral];
\draw (17) -- (19);

\foreach \n in {0,...,19}
	\node at (\n)[draw, thick, circle, fill=white, inner sep = 2.8pt]{\n};
\end{tikzpicture}}
\caption{Second Iteration}
\label{alteredrobot}
\end{center}
\end{minipage}
\begin{minipage}{0.5\textwidth}
\begin{center}
\resizebox{\textwidth}{!}{
\begin{tikzpicture}[thick, yscale=0.6]
\tikzset{
  font={\fontsize{18pt}{12}\selectfont}}
\coordinate (0) at (-3.46, 2);
\coordinate (1) at (3, 5);
\coordinate (2) at (6, -5);
\coordinate (3) at (-7.46, 2);
\coordinate (4) at (0, -4);
\coordinate (5) at (3.46, 2);
\coordinate (6) at (6.5, -8);
\coordinate (7) at (-7.46, -2);
\coordinate (8) at (-3, -11);
\coordinate (9) at (-1, -12);
\coordinate (10) at (1, -12);
\coordinate (11) at (0, -8);
\coordinate (12) at (-3.46, -2);
\coordinate (13) at (3.46, -2);
\coordinate (14) at (3, -11);
\coordinate (15) at (6.92, 0);
\coordinate (16) at (-3, 5);
\coordinate (17) at (0, 6);
\coordinate (18) at (-3.8, 8);
\coordinate (19) at (-0.8, 9);

\draw (4) -- (12) [two thirds];
\draw (8) -- (9) [integral];
\draw (0) -- (12);
\draw (0) -- (3) [one third];
\draw (0) -- (16) [two thirds];
\draw (1) -- (5)  [one third];
\draw (2) -- (13);
\draw (2) -- (6) [integral];
\draw (3) -- (7) [two thirds];
\draw (4) -- (13);
\draw (4) -- (11) [one third];
\draw (5) -- (13) [two fifths];
\draw (5) -- (15) [two fifths];
\draw (7) -- (12) [one third];
\draw (8) -- (11);
\draw (9) -- (11);
\draw (10) -- (14) [three fifths];
\draw (10) -- (11) [two fifths];
\draw (11) -- (14) [two fifths];
\draw (13) -- (15) [three fifths];
\draw (16) -- (17);
\draw (1) -- (17) [two thirds];
\draw (16) -- (18) [one third];
\draw (18) -- (19) [two thirds];
\draw (17) -- (19) [one third];

\foreach \n in {0,...,19}
	\node at (\n)[draw, thick, circle, fill=white, inner sep = 2.8pt]{\n};
\end{tikzpicture}}
\caption{Third Iteration}
\end{center}
\end{minipage}
\end{minipage}
\begin{minipage}[c]{0.15\textwidth}
\centering
\begin{flushright}
\textbf{Edge ordering:}\\
(1, 5)\\
(2, 13)\\
(10, 14)\\
(0, 3)\\
(17, 19)\\
(4, 12)\\
(5, 13)\\
(7, 12)\\
(16, 18)\\
(5, 15)\\
(3, 7)\\
(18, 19)\\
(8, 9)\\
(0, 16)\\
(1, 17)\\
(11, 14)\\
(0, 12)\\
(16, 17)\\
(4, 13)\\
(2, 6)\\
(10, 11)\\
(9, 11)\\
(4, 11)\\
(8, 11)\\
(13, 15)
\end{flushright}
\end{minipage}
\end{figure}

Simply by following the algorithm through, we see that we will eventually end 
up with a cut ($\{4, 12, 0, 16, 17, 1, 5, 15, 
13\}$ in Figure~\ref{alteredrobot}) with $2n+1$ edges coming out of it.  
Furthermore, by the conditions of the matching ($x(\delta(u))=1$) and the fact 
that these edges form a path in the matching, each edge coming out of this cut 
must have the same value, which we will call $\zeta$.  Since $x(\delta(S)) \geq 
1$, the minimum cost is when $(2n+1)\zeta=1$ or $\zeta=\frac{1}{2n+1}$.

\section{Cycling example}\label{sec:cycling}

Even when seeking a lexicographically-minimal optimal solution to \ref{Pf} 
with half-integrality maintained throughout, cycling can still occur in the 
absence of perturbation. The graph in Figures \ref{Step1} and \ref{Step2} 
(which is easily seen to have a perfect matching), with each edge having cost 
$1$, exhibits such behavoir.  At all times, an optimal $\F$-positively critical dual is given by a 
vector with the vertices having value $\frac{1}{2}$ and the odd sets in $\F$ 
having value $0$. Since Algorithm \ref{alg:cvv} only retains cuts which have 
nonzero values in the dual (step \ref{cvv:H'}), no cuts are preserved between 
iterations.  Thus, the blossom inequalities which were violated in the previous 
iteration are once again allowed to be violated in the next iteration, leading 
to cycling.

\begin{figure}
\begin{center}
\begin{minipage}{0.41\textwidth}
\centering
\begin{tikzpicture}[thick, xscale=1]
\coordinate (Zero) at (1,4);
\coordinate (One) at (1,0);
\coordinate (Two) at (5,-1);
\coordinate (Three) at (3,4);
\coordinate (Four) at (5,4);
\coordinate (Five) at (5,0);
\coordinate (Six) at (1,-1);
\coordinate (Seven) at (2,1.5);
\coordinate (Eight) at (3,5.5);
\coordinate (Nine) at (3,2);
\draw (One) -- (Five)	[half integral thin];
\draw (Five) -- (Six);
\draw (Three) -- (Five);
\draw (Four) -- (Eight)	[integral thin];
\draw (Zero) -- (Nine)	[half integral thin];
\draw (Two) -- (Five);
\draw (Five) -- (Seven)	[half integral thin];
\draw (Zero) -- (Three)	[half integral thin];
\draw (One) -- (Seven)	[half integral thin];
\draw (Zero) -- (Eight);
\draw (One) -- (Six);
\draw (Five) -- (Nine);
\draw (Three) -- (Four);
\draw (Three) -- (Eight);
\draw (Four) -- (Five);
\draw (Two) -- (Six)	[integral thin];
\draw (Three) -- (Nine)	[half integral thin];
\draw (Zero) -- (One);
\node at (Zero)[draw, circle, fill=white, inner sep=2pt]{0};
\node at (One)[draw, circle, fill=white, inner sep=2pt]{1};
\node at (Two)[draw, circle, fill=white, inner sep=2pt]{2};
\node at (Three)[draw, circle, fill=white, inner sep=2pt]{3};
\node at (Four)[draw, circle, fill=white, inner sep=2pt]{4};
\node at (Five)[draw, circle, fill=white, inner sep=2pt]{5};
\node at (Six)[draw, circle, fill=white, inner sep=2pt]{6};
\node at (Seven)[draw, circle, fill=white, inner sep=2pt]{7};
\node at (Eight)[draw, circle, fill=white, inner sep=2pt]{8};
\node at (Nine)[draw, circle, fill=white, inner sep=2pt]{9};
\end{tikzpicture}
\caption{Odd iterations}
\label{Step1}
\end{minipage}
\begin{minipage}{0.41\textwidth}
\centering
\begin{tikzpicture}[thick, xscale=1]
\coordinate (Zero) at (1,4);
\coordinate (One) at (1,0);
\coordinate (Two) at (5,-1);
\coordinate (Three) at (3,4);
\coordinate (Four) at (5,4);
\coordinate (Five) at (5,0);
\coordinate (Six) at (1,-1);
\coordinate (Seven) at (2,1.5);
\coordinate (Eight) at (3,5.5);
\coordinate (Nine) at (3,2);
\draw (One) -- (Five);
\draw (Five) -- (Six)	[half integral thin];
\draw (Three) -- (Five);
\draw (Four) -- (Eight)	[half integral thin];
\draw (Zero) -- (Nine)	[integral thin];
\draw (Two) -- (Five)	[half integral thin];
\draw (Five) -- (Seven);
\draw (Zero) -- (Three);
\draw (One) -- (Seven)	[integral thin];
\draw (Zero) -- (Eight);
\draw (One) -- (Six);
\draw (Five) -- (Nine);
\draw (Three) -- (Four)	[half integral thin];
\draw (Three) -- (Eight)[half integral thin];
\draw (Four) -- (Five);
\draw (Two) -- (Six)	[half integral thin];
\draw (Three) -- (Nine);
\draw (Zero) -- (One);
\node at (Zero)[draw, circle, fill=white, inner sep=2pt]{0};
\node at (One)[draw, circle, fill=white, inner sep=2pt]{1};
\node at (Two)[draw, circle, fill=white, inner sep=2pt]{2};
\node at (Three)[draw, circle, fill=white, inner sep=2pt]{3};
\node at (Four)[draw, circle, fill=white, inner sep=2pt]{4};
\node at (Five)[draw, circle, fill=white, inner sep=2pt]{5};
\node at (Six)[draw, circle, fill=white, inner sep=2pt]{6};
\node at (Seven)[draw, circle, fill=white, inner sep=2pt]{7};
\node at (Eight)[draw, circle, fill=white, inner sep=2pt]{8};
\node at (Nine)[draw, circle, fill=white, inner sep=2pt]{9};
\end{tikzpicture}
\caption{Even iterations}
\label{Step2}
\end{minipage}
\begin{minipage}[c]{0.15\textwidth}
\raggedleft
\textbf{Edge ordering:}\\
(5, 9)\\
(3, 5)\\
(4, 5)\\
(1, 6)\\
(3, 9)\\
(0, 8)\\
(5, 7)\\
(3, 4)\\
(1, 5)\\
(5, 6)\\
(0, 3)\\
(0, 1)\\
(1, 7)\\
(0, 9)\\
(2, 6)\\
(3, 8)\\
(2, 5)\\
(4, 8)
\end{minipage}
\end{center}
\end{figure}
\sloppy
This precludes us from ensuring termination of an approach using a 
lexicographically-minimal primal with an unperturbed dual.  More significantly, 
it also means we could not even implement a heuristic version which, in the 
event of cycling, would restart the algorithm with randomized edge orderings. 
Were half-integrality to occur in tandem with cycling, as it does in Section 
\ref{NonHalf}, we could simply verify half-integrality at each iteration, and, 
in rare cases of non-half-integrality, begin the entire algorithm again with a 
different edge ordering, giving us a good average-case runtime.\footnote{We 
discovered the dancing robot after searching through over two thousand randomly-generated graphs, all of which were rapidly and correctly solved by the use of 
a lexicographically-minimal primal and unperturbed dual. Furthermore, if a random edge ordering is applied to the dancing robot for example, it is very likely that a perfect matching will be found without issue.}  However, this graph 
shows that when a possibility of cycling exists, it is likely undetectable by 
any means other than direct comparison between iterations. This forces us to 
adopt an algorithm which simulates perturbations in the dual.

\section{Solving LP problems with perturbed costs}\label{sec:perturb}

Chandrasekaran \textit{et al.}~\cite{chandrasekaran_cutting_2016} chose a
specific perturbation of the costs, namely, adding $2^{-i}$ on each edge $i$.  
In general, perturbation in linear programming (usually for the purpose of
eliminating degeneracy, as in \cite{charnes_optimality_1952}) is of the form 
$\epsilon^i$ where $\epsilon$ is sufficiently small.  In theoretical analysis, 
$\epsilon$ is simply left unspecified. In the same spirit, we show in this
section how we could obtain optimal solutions to both the primal and dual
problems with perturbed costs, working with the fact that $\epsilon$ is
sufficiently small yet not given exactly, without the need of an optimal basis.
The method we describe is therefore able to avoid working 
directly with cost values that exceed the representation capacity of 
fixed-length floating-point formats typically used by LP solvers.  Our method 
is applicable to any situation in which the objective function of a generic LP 
problem is perturbed in order to enforce uniqueness of the optimal solution.  
In the next section, we specialize it to Algorithm \ref{alg:cvv}.

Let $A \in \RR^{m\times n}$, 
$b \in \RR^m$, and
$c_0,\ldots,c_k \in \RR^n$ for some nonnegative integer $k$.
Let $N \subseteq \{1,\ldots,n\}$.
Let $F = \{1,\ldots,n\}\setminus N$.
Define $c_\epsilon$ as $\sum_{p = 0}^k c_p \epsilon^p$ where $\epsilon \geq 0$.

Consider the linear programming problem:
\begin{align*}
\min \ c_\epsilon^\T x \tag{$P(\epsilon)$}\label{eqn:Peps}\\
\st \
A x & \geq  b \\
 x_j & \geq 0 & \forall~j \in N.
\end{align*}
Its dual is
\begin{align*}
\max \ & y^\T b \tag{$D(\epsilon)$}\label{eqn:Deps}\\
\st \  y^\T A_{:,j} & \leq c_\epsilon(j) & \forall~j \in N \\
y^\T A_{:,j} & = c_\epsilon(j) & \forall~j \in F \\
y &\geq 0.
\end{align*}

\begin{algorithm}[H]\label{alg:perturb}
    \SetAlgoLined
    \caption{Algorithm for perturbed LP primal-dual pair}
    \KwIn{\ref{eqn:Peps} with $\epsilon > 0$ sufficiently small.}
    \KwOut{An optimal $x'$ to \ref{eqn:Peps} and an optimal $y'$ to 
    \ref{eqn:Deps}.}
    
    $E \leftarrow \emptyset$, $J \leftarrow \emptyset$

    \For{$p \leftarrow 0$ \KwTo $k$}{
        $\overline{J} \leftarrow N \setminus J$
        
        \abovedisplayskip=0pt
		\belowdisplayskip=-\baselineskip
        Set $x_p$ to an optimal solution to
        \begin{align*}
           \min  \displaystyle\sum_{j \in \overline{J}} & c_p(j) x(j) \\
           \st 
            \displaystyle\sum_{j \in \overline{J}}{A_{i,j}}  x(j) & \geq b(i) & \forall~i \notin E \\
            \displaystyle\sum_{j \in \overline{J}}{A_{i,j}} x(j)& = b(i) & \forall~i \in E \\
            x(j)& \geq 0 & \forall~j \in \overline{J}
        \end{align*}
        \label{alg:perturb:primal}\\
        and $y_p$ to an optimal solution to its dual.

        $E \leftarrow E\cup \{ i : y_{p}(i) > 0\}$

        $J \leftarrow J \cup \{ j : {y_{p}}^\T A_{:,j} < b(j)\}$
    }

    Form $x' \in \RR^n$ such that $x'(j) = x_k(j)$ for all $j \notin J$ and 
    $x'(j) = 0$ for all $j \in J$.

    $y' \leftarrow \displaystyle\sum_{p=0}^k \epsilon^p y_p$
    
    \KwRet{$x', y'$}
\end{algorithm}

The correctness of Algorithm \ref{alg:perturb} follows from 
Lemma~\ref{cheunglp} below.  Before we give the proof, we illustrate the 
algorithm with an example.  For each $p$,  let $M_p$ denote the LP problem in 
step \ref{alg:perturb:primal} of the algorithm. Consider \ref{eqn:Peps} with
\begin{align*}
A = \begin{pmatrix} 1 & 0 & 1 \\ 0 & 1 & 2 \end{pmatrix},&&
b = \begin{pmatrix} 1 \\ 1 \end{pmatrix},&&
c_0 = \begin{pmatrix} 1 \\ 1 \\ 3 \end{pmatrix},&&
c_1 = \begin{pmatrix} 4 \\ 2 \\ 0 \end{pmatrix},&&
c_2 = \begin{pmatrix} -2 \\ -1 \\ 1 \end{pmatrix},&&
N = \{1,2\}.
\end{align*}
  The dual problem is
\[\begin{array}{rrrrcl}
\max & y(1) & + & y(2) \\
\st 
&   y(1) &   &      & \leq & 1+4\epsilon-2\epsilon^2 \\
&        &   & y(2) & \leq & 1+2\epsilon-\epsilon^2 \\
&   y(1) & + & 2y(2)& = &3+\epsilon^2 \\
& y(1) &, & y(2) & \geq & 0.
\end{array}\]
Note that $x_0 = \begin{pmatrix} 1 & 1 & 0\end{pmatrix}^\T$
and $y_0= \begin{pmatrix} 1 & 1 \end{pmatrix}^\T$
are optimal solutions to $P(0)$ and $D(0)$, respectively, which are in turn equivalent to $M_0$ and its dual.

Since $y_0(1), y_0(2) > 0$ and all the constraints in
$D(0)$ are satisfied with equality at $y_0$, $M_1$ is
\[\begin{array}{rrrrrrcl}
\min & 4x(1) & + & 2x(2)  \\
\st
&  x(1) &  &      & + & x(3) & = & 1 \\
&       &  & x(2) & + & 2x(3) & = & 1 \\
& x(1)  &, & x(2) &   &       & \geq & 0.
\end{array}\]
The dual of $M_1$ is
\[\begin{array}{rrrrcl}
\max & y(1) & + & y(2) \\
\st 
&   y(1) &   &      & \leq & 4 \\
&        &   & y(2) & \leq & 2 \\
&   y(1) & + & 2y(2)& = & 0.
\end{array}\]
An optimal solution to $M_1$ is $x_1 = \begin{pmatrix} \frac{1}{2} & 0 &
\frac{1}{2}\end{pmatrix}^\T$.  An optimal dual solution is
$y_1 =  \begin{pmatrix} 4 & -2\end{pmatrix}^\T$.
The second constraint in the dual is not active at $y_1$.
Hence, $M_2$ is
\[\begin{array}{rrrrcl}
\min & -2x(1) & + & x(3)  \\
\st
&  x(1) & + & x(3) & = & 1 \\
&       &   & 2x(3) & = & 1 \\
& x(1)  &   &       & \geq & 0.
\end{array}\]
The dual of $M_2$ is
\[\begin{array}{rrrrcl}
\max & y(1) & + & y(2) \\
\st 
&   y(1) &   &      & \leq & -2 \\
&   y(1) & + & 2y(2)& = & 1.
\end{array}\]
An optimal solution to $M_2$ is
$\begin{pmatrix} \frac{1}{2} & 0 & \frac{1}{2}\end{pmatrix}^\T$.
An optimal dual solution is
$y_2 =  \begin{pmatrix} -2 & \frac{3}{2} \end{pmatrix}^\T$.

Setting 
\[
y' = y_0 + \epsilon y_1 + \epsilon^2 y_2 =\begin{pmatrix}
1+ 4\epsilon - 2\epsilon^2 \\
1 -2\epsilon + \frac{3}{2}\epsilon^2
\end{pmatrix},\]
we have that $y'$ is a feasible solution to \ref{eqn:Deps} 
and satisfies complementary slackness with
$x' = \begin{pmatrix} \frac{1}{2} & 0 & \frac{1}{2}\end{pmatrix}^\T$ for the
primal-dual pair \ref{eqn:Peps} and \ref{eqn:Deps}
for a sufficiently small $\epsilon > 0$.

\begin{lemma}\label{cheunglp}
Let $M_p$ denote the LP problem solved in step \ref{alg:perturb:primal}
of Algorithm~\ref{alg:perturb}.
\begin{enumerate}
\item
For every $p \in \{1,\ldots,k\}$,
$x_p$ is an optimal solution to $M_0,\ldots,M_{p-1}$.
\item $x'$ and $y'$ are feasible to
    \ref{eqn:Peps} and \ref{eqn:Deps}, respectively,
     and satisfy complementary slackness.
\end{enumerate}
\end{lemma}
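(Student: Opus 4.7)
The plan is to prove the two parts separately: part 1 by induction on $p$, and part 2 by directly assembling $x'$ and $y'$ and checking feasibility and complementary slackness, using the sets $E$ and $J$ as bookkeeping for CS at each earlier stage.

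For part 1, I strengthen the claim to say that every feasible solution to $M_p$ is optimal for $M_q$ whenever $q < p$. The inductive step is the key content. The extra constraints imposed by $M_p$ beyond those of $M_{p-1}$ are $(Ax)_i = b(i)$ for $i$ with $y_{p-1}(i) > 0$ and $x(j) = 0$ for $j$ with $y_{p-1}^\T A_{:,j} < c_{p-1}(j)$, which are precisely the complementary slackness conditions pairing $y_{p-1}$ with any primal optimum of $M_{p-1}$. Because $E$ and $J$ only grow through the loop, $M_p$ also enforces the CS conditions for every earlier dual $y_q$. Hence any feasible $x$ for $M_p$, paired with the known optimal dual $y_q$, satisfies CS for $M_q$ and is therefore optimal for $M_q$.

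For part 2, primal feasibility of $x'$ is immediate: $x_k$ satisfies the constraints of $M_k$, and extending by zero on $J$ preserves $Ax' \geq b$ with equality on indices in $E$. Dual feasibility of $y'$ requires $y' \geq 0$ together with $y'^\T A_{:,j} \leq c_\epsilon(j)$ (with equality for $j \in F$). For non-negativity, at every stage before $i$ is placed in $E$ the constraint $y_p(i) \geq 0$ appears in the dual of $M_p$; hence the lowest-order non-zero term of $\sum_p \epsilon^p y_p(i)$ is strictly positive and dominates the higher-order tail for small $\epsilon$. For the inequality on $j$, I split cases. For $j \in F$ or $j \in N \setminus J$, each stage already bounds $y_p^\T A_{:,j}$ by $c_p(j)$ (with equality for $j \in F$), and a weighted sum in $\epsilon^p$ yields the required bound. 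For $j \in J$, letting $p^*$ be the stage at which $j$ entered $J$, the strict deficit $y_{p^*}^\T A_{:,j} - c_{p^*}(j) < 0$ dominates the uncontrolled tail for small $\epsilon$. Complementary slackness then follows from the same bookkeeping: if $x'(j) > 0$ then $j \notin J$, so at every stage the dual was tight at $j$ (otherwise $j$ would have been added to $J$), giving $y'^\T A_{:,j} = c_\epsilon(j)$; and if $y'(i) > 0$ for small $\epsilon$ then some $y_p(i) > 0$, placing $i$ in $E$ thereafter and forcing $(Ax')_i = b(i)$ at stage $k$.

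The main obstacle is precisely the $j \in J$ case in dual feasibility: after $j$ is dropped from $M_{p^*+1}$ onward, the values $y_p^\T A_{:,j}$ for $p > p^*$ are unconstrained and could, a priori, be arbitrarily large. The resolution is that the strict deficit at stage $p^*$ is an $\epsilon^{p^*}$ effect that dominates the $O(\epsilon^{p^*+1})$ tail. This is exactly where the hypothesis ``$\epsilon > 0$ sufficiently small'' is used; an explicit threshold can be extracted from the finitely many LPs $M_0, \ldots, M_k$ in the spirit of Lemma~\ref{cvvepsilon}.
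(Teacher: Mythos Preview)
Your proposal is correct and follows essentially the same route as the paper: part~1 is obtained by observing that $M_p$ arises from $M_{p-1}$ by adjoining precisely the complementary-slackness constraints with $y_{p-1}$ (your strengthening to all feasible points of $M_p$ is implicit in the paper's argument), and part~2 is handled by the same case split on $j \in F$, $j \in N\setminus J$, $j \in J$ together with the leading-term-dominates argument for both $y'\ge 0$ and the $j\in J$ inequality. One tiny point to make explicit: in the complementary-slackness step, when the witnessing index is $p=k$ (so $i$ is added to $E$ only after the last stage), the equality $(Ax')_i=b(i)$ comes from complementary slackness between $x_k$ and $y_k$ in $M_k$ rather than from $i\in E_k$.
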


\begin{proof}

For each $j = 1,\ldots,p$, $M_j$ is obtained from $M_{j-1}$
by adding constraints to enforce complementary slackness with $y_{j-1}$.  
Removing $x(j)$ can be viewed as adding the constraint $x(j) = 0$.  It
follows that $x_p$ is feasible to $M_j$ and satisfies complementary slackness
with $y_j$ for all $j \in \{0,\ldots,p-1\}$.

To prove the second part, we start by noting that $x'$ is feasible to \ref{eqn:Peps}.  Let $E_p$, $J_p$, and $\overline{J_p}$ be the sets $E$, $J$, and $\overline{J}$ referred to in $M_p$.  The dual of $M_p$ is 
\begin{align*}
 \max \ & y^\T b \\
 \st \
   y^\T A_{:,j}& \leq c_p(j) & \forall~j \in \overline{J_p} \\
   y^\T A_{:,j}& = c_p(j) & \forall~j \in F \\
   y(i) &\geq 0 & \forall~i \notin E_p.
\end{align*}

Clearly, $y'^\T A_{:,j} = c_\epsilon(j)$ for all $j \in F$.
Next, we show that $y'^\T A_{:,j} \leq c_\epsilon(j)$ for all $j \in N$.

Suppose that $j \in \overline{J_k}$.
Since $\overline{J_p} \subseteq \overline{J_{p-1}}$ for $p = 1,\ldots, k$,
we have $y_p^\T A_{:,j} \leq c_p(j)$.
Thus, $$y'^\T A_{:,j} =
\sum_{p = 0}^k \epsilon^p y_p^\T A_{:,j} 
\leq \displaystyle\sum_{p = 0}^k \epsilon^p c_p(j) = c_\epsilon(j).$$

Now, suppose that $j \in J_k$. Then, there exists $r < n$ such that
$y_r^\T A_{:,j} < c_r(j)$.
Let $s_i = c_i(j) - y_i^\T A_{:,j}$ for $i = 1,\ldots,m$.
Thus, 
\begin{align*}
c_\epsilon(j) - {y'}^\T A_{:,j}
& = \sum_{p = 0}^k \epsilon^p s_p \\
& = \sum_{p = 0}^r \epsilon^p s_p + \sum_{p = r+1}^k \epsilon^p s_p \\
& \geq \epsilon^r \left(s_r + \sum_{p = r+1}^k \epsilon^{p-r} s_p\right) \\
& = \epsilon^r \left(s_r + \epsilon\sum_{q = 0}^{k-r-1} \epsilon^{q} s_{q+r+1}\right) \\
& > 0
\end{align*}
for $\epsilon > 0$ sufficiently small.

We now show that $y' \geq 0$.
Consider $y'(j)$ for some $j \in \{1,\ldots,m\}$.  If $j \notin E_k$,
then $y_p(j) \geq 0$ for $p = 0,\ldots,k$, implying that $y'(j) \geq 0$.
Otherwise, $j \in E_r$ for some $r \in \{1,\ldots,k\}$. Choose $r$
as small as possible.  We must have $y_{r-1}(j) > 0$.
Then, 
\begin{align*}
y'(j)
& = \sum_{p = 0}^k \epsilon^p y_p(j) \\
& \geq \sum_{p = r}^k \epsilon^p y_p(j) \\
& = \epsilon^r \left(y_r(j)_ + \epsilon\sum_{p = 0}^{k-r-1} \epsilon^{p} y_{p+r+1}(j)\right) \\
& > 0
\end{align*}
for $\epsilon > 0$ sufficiently small.

Finally, to see that $x'$ and $y'$ satisfy complementary slackness, note that,
by part 1, if $x'(j) > 0$, then $y_p^\T A_{:,j} = c_p(j)$ for all $p \in 
\{0,\ldots,k\}$.  Thus ${y'}^\T A_{:,j} = c_\epsilon(j)$.  Furthermore, if 
$A_{i,:}x' < b(i)$ for some $i$, ${y_p}(i) = 0$ for all $p \in 
\{0,\ldots,k\}$.  This implies that $y'(i) = 0$.
\end{proof}

We now make two observations that will be useful in the next section.  First, 
we can see from the proof of Lemma~\ref{cheunglp} that the dual of $M_p$ can be 
obtained directly from the dual of $M_{p-1}$ and an associated optimal solution 
$y_{p-1}$ by removing constraints (including the nonnegativity bound 
constraints) that are not active at $y_{p-1}$.  It follows that one can work 
exclusively with the duals of $M_0,\ldots,M_k$ if one is only interested in 
obtaining an optimal solution to \ref{eqn:Deps}.  Moreover, in practice, $y'$ 
hardly needs to be worked out for a particular value of $\epsilon$ and can be 
represented by the list $y_0,\ldots,y_k$.  Then, to determine if $y'(i) \neq 0$ 
for some $i$, simply check if there exists a $p$ such that $y_p(i) \neq 0$, 
since, for sufficiently small $\epsilon$, $y'(i) = 0$ if and only if $y_p(i) = 
0$ for all $p$.

\section{Modified Chandrasekaran-Végh-Vempala algorithm}\label{sec:newalg}

We now modify Algorithm~\ref{alg:cvv} to circumvent the need to utilize an 
explicit perturbation of the edge costs.  First, we arbitrarily order the edges 
and increase the cost of each edge $i$ by $\epsilon^i$ for some sufficiently 
small $\epsilon > 0$ that will remain unspecified.  By Lemma~\ref{cvvepsilon}, 
we may assume that with such a perturbation, Algorithm~\ref{alg:cvv} will still 
return a minimum-cost perfect matching.  In Algorithm~\ref{alg:cvv}, 
step~\ref{cvv:primalstep} and step~\ref{cvv:extremal} involve solving \ref{Pf} 
and \ref{D*} respectively with perturbed data. We emulate perturbations in the 
first of these by finding a lexicographically-minimal optimal solution.  The 
other is handled through the method developed in the previous section applied 
to the following LP, which is easily seen to be equivalent to \ref{D*}:
\begin{align*}
	\max \displaystyle \sum_{S \in \mathscr{V} \cup \F_x} & -\frac{1}{|S|}r(S) \\
    \st \ -r(S)-\Pi(S) & \leq-\Gamma(S) & \forall~S \in \mathscr{V} \cup \F_x \\
         -r(S)+\Pi(S) & \leq\Gamma(S) & \forall~S \in \mathscr{V} \cup \F_x \\
          \displaystyle \sum_{S \in \mathscr{V} \cup \F_x: uv \in \delta(S)}\Pi(S) & =c(uv) & \forall~uv \in \supp(x) \\
          \displaystyle \sum_{S \in \mathscr{V} \cup \F_x: uv \in \delta(S)}\Pi(S) & \leq c(uv) & \forall~uv \in E\setminus\supp(x) \\
            \Pi(S) & \geq0 & \forall~S \in \F_x, \\
           r(S) & \geq0 & \forall~S \in \mathscr{V} \cup \F_x
\end{align*}
where $\mathscr{V} = \{ \{v\} : v \in V \}$ and $\F_x = \{ S \in \F :
x(\delta(S)) = 1\}$.  With explicit perturbation of the edge costs, $\Gamma$ 
and $c$ will be polynomials in $\epsilon$.  Intuitively, we define $\Gamma_i$ 
and $c_i$ to be the coefficients of $\epsilon^i$ in $\Gamma$ and $c$; we will 
define these rigorously in a moment.

The reason for writing \ref{D*} as above is to make it plain that it can be 
viewed as the dual problem \ref{eqn:Deps} of some 
\ref{eqn:Peps} with cost values given by polynomials in $\epsilon$.
However, in an actual algorithm as seen below, we can work directly with
\ref{D*} as originally written.
With these changes and the following definitions, we obtain
Algorithm~\ref{alg:new}.

Given an ordering $\sigma : E \mapsto \{1, \dots, |E|\}$ on the edges of $G$, 
define the following cost function:
$$ c_i(uv)=\begin{cases}
c(uv) & i = 0\\
1 & i > 0,\ \sigma(uv) = i\\
0 & i > 0,\ \sigma(uv) \neq i
\end{cases}$$

With this, we define the following linear program:
\begin{align*}
	\min \displaystyle \sum_{S \in \mathscr{V} \cup \F_x} & \frac{1}{|S|}r(S)
	\tag{$D^i_\mathscr{F}(G, c, \sigma, \Gamma, L, M, N, Q)$}\label{Di} \\
    \st \ r(S)+\Pi(S) & \geq\Gamma_i(S) & \forall~S \in (\mathscr{V} \cup \F_x)\setminus L \\
         -r(S)+\Pi(S) & \leq\Gamma_i(S) & \forall~S \in (\mathscr{V} \cup \F_x)\setminus M \\
          \displaystyle \sum_{S \in \mathscr{V} \cup \F_x: uv \in 
          \delta(S)}\Pi(S) & =c_i(uv) & \forall~uv \in \supp(x) \\
          \displaystyle \sum_{S \in \mathscr{V} \cup \F_x: uv \in 
          \delta(S)}\Pi(S) & \leq c_i(uv) & \forall~uv \notin \supp(x) \cup N \\
            \Pi(S) & \geq0 & \forall~S \in \F_x\setminus Q. 
\end{align*}

Intuitively, $c_i$ and $\Gamma_i$ correspond to the coefficients of 
$\epsilon^i$ in $c$ and $\Gamma$ if we were to perturb the edge costs on the 
graph by $\epsilon^i$ and run Algorithm \ref{alg:cvv}.

\begin{algorithm}[H]
	\caption{Unperturbed C-P-Matching Algorithm}\label{alg:new}
	\KwIn{A graph $G=(V, E)$ with edge costs $c \in \ZZ^E$ and an 
	ordering $\sigma : E \mapsto \{1, \dots, |E|\}$.}
	\KwOut{A binary vector $x$ representing a minimum-cost perfect matching on $G$.}
	$\F \leftarrow \emptyset$; $\Gamma_0, \dots, \Gamma_{|E|} \leftarrow 0$
	
	\While{$x$ is not integral}{
		Let $x$ be the lexicographically-minimal optimal solution to \ref{Pf} 
		with respect to $\sigma$.
		
		$L \leftarrow \emptyset; M \leftarrow \emptyset; N \leftarrow 
		\emptyset; Q \leftarrow \emptyset; D_0, \dots, D_{|E|} \leftarrow 0$
		
		$\F_x \leftarrow \{ S \in \F : x(\delta(S)) = 1\}$
		
		\For{$i \leftarrow 0$ \KwTo $|E|$}{
			\label{dualline}Obtain an optimal solution $r, \Pi$ to \ref{Di}.\\
			\vspace{\baselineskip}
			$L \leftarrow L \cup \{S \in V \cup \F_x : 
			r(S)+\Pi(S)\neq\Gamma_{i}(S)\}$ \label{removeconstraints_start}
			
			$M \leftarrow M \cup \{S \in V \cup \F_x : -r(S) + \Pi(S) \neq 
			\Gamma_i(S)\}$
		
			$N \leftarrow N \cup \{uv \in E : \sum_{uv \in \delta(S)} \Pi(S) \neq c_i(uv)\}$
			
			$Q \leftarrow Q \cup \{S \in \F_x : \Pi(S) \neq 0 
			\}$\label{removeconstraints_end}\\
			\vspace{\baselineskip}
			$D_i \leftarrow \Pi$
		}
		$\mathscr{H}' \leftarrow \{S \in \F : \exists\ i\ \mathrm{s.t.}\ D_i(S) > 0\}$
		
		Let $\mathscr{C}$ be the set of odd cycles in $\supp(x)$.  For each $C \in \mathscr{C}$, let $V(C)$ be the union of $C$ with all sets in $\mathscr{H}'$ intersecting it.
		
		$\mathscr{H}'' \leftarrow \{V(C) : C \in \mathscr{C}\}$
		
		$\F \leftarrow \mathscr{H}' \cup \mathscr{H}''$
		
		$\Gamma \leftarrow D$
	}
\end{algorithm}

Steps \ref{removeconstraints_start} through \ref{removeconstraints_end} exist 
to remove the slack constraints from the next iterations of \ref{Di}, as in 
Algorithm \ref{alg:perturb}.

A reference implementation, written in Python 3, is available at 
\cite{kielstra_code_2019}.

\begin{lemma}
	In every iteration of the Unperturbed C-P-Matching Algorithm (\ref{alg:new}), $x$ is equal to its 
	counterpart in the C-P-Matching Algorithm (\ref{alg:cvv}) with perturbations $c(i)=\epsilon^i$.
\end{lemma}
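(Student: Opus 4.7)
The plan is to proceed by induction on the iteration index $t$ of the outer \textbf{while} loop, with inductive hypothesis that at the start of iteration $t$ both algorithms hold the same $\F$ and that the original algorithm's $\Gamma$ equals $\sum_{i=0}^{|E|} \epsilon^i D_i$, where $D_0,\dots,D_{|E|}$ is the list stored by Algorithm~\ref{alg:new}. The base case is immediate since both algorithms initialize $\F=\emptyset$ and all of $\Gamma, D_0,\dots,D_{|E|}$ to zero.

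For the primal step in the inductive step, both algorithms solve $P_\F(G,\cdot)$ over the same $\F$. The perturbed version uses costs $c(uv)+\epsilon^{\sigma(uv)}$ for sufficiently small $\epsilon$, which by Lemma~\ref{cvvepsilon} still yields a valid run of the original algorithm. By the result cited from Schrijver~\cite{schrijver_theory_2000}, the unique optimum of that perturbed LP coincides with the lexicographically-minimal optimum (with respect to $\sigma$) of the unperturbed $P_\F(G,c)$, which is exactly what Algorithm~\ref{alg:new} computes. Thus the primal $x$, and in particular $\supp(x)$ and $\F_x$, agree in the two algorithms.

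For the dual step I would argue that the inner \textbf{for} loop of Algorithm~\ref{alg:new} is precisely Algorithm~\ref{alg:perturb} applied to the LP reformulation of $D^*_\F(G,c)$ displayed at the top of Section~\ref{sec:newalg}, with the perturbation decompositions $c=\sum\epsilon^i c_i$ (as defined via $\sigma$) and $\Gamma=\sum\epsilon^i\Gamma_i=\sum\epsilon^i D_i$. With this identification, the subproblem $M_p$ of Algorithm~\ref{alg:perturb} coincides with \ref{Di}, provided the sets $L,M,N,Q$ in \ref{Di} correctly encode the constraint-removal rules dictated by the observation following Lemma~\ref{cheunglp} (constraints inactive at $y_{p-1}$ are dropped). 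Lemma~\ref{cheunglp} then yields that $\Pi = \sum_{i=0}^{|E|}\epsilon^i D_i$ is an optimum of the perturbed \ref{D*}, which by the uniqueness guaranteed under perturbation must equal the $\Gamma$-extremal $\Pi$ produced by Algorithm~\ref{alg:cvv} in this iteration. Setting $\Gamma\leftarrow\Pi$ in the original and $D\leftarrow\Pi$ in the new algorithm reestablishes the inductive hypothesis for iteration $t+1$.

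To finish, the update of $\mathscr{H}'$ must also match: the original uses $\{S:\Pi(S)>0\}$, the new algorithm uses $\{S:\exists i,\ D_i(S)>0\}$. Since $\Pi$ is dual-feasible we have $\Pi(S)\ge 0$, and the positivity argument for $y'(j)$ inside the proof of Lemma~\ref{cheunglp} shows that whenever the expansion $\sum\epsilon^i D_i(S)$ is not identically zero, its first nonzero coefficient is strictly positive, so for $\epsilon$ small enough $\Pi(S)>0$ iff some $D_i(S)>0$. With $\mathscr{H}'$ matching and $\supp(x)$ matching, the odd-cycle set $\mathscr{C}$, the sets $\hat C$, $\mathscr{H}''$, and the new $\F$ all agree. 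The main obstacle I anticipate is the bookkeeping in step three: one must verify in detail that the sets $L,M,N,Q$ accumulated across iterations of \ref{Di} coincide with the sets $E$ and $J$ maintained by Algorithm~\ref{alg:perturb} when applied to the reformulated \ref{D*}, so that the successive LPs really are the $M_p$'s of Lemma~\ref{cheunglp}. A secondary but important subtlety is the sign consistency needed to equate the two formulations of $\mathscr{H}'$, which I expect to be dispatched by the inequality chain already present in the proof of Lemma~\ref{cheunglp}.
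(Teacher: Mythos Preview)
Your approach is essentially the same as the paper's: reduce to showing that $\F$ agrees across iterations, invoke the Schrijver fact that the lexicographically-minimal primal coincides with the perturbed optimum, identify the inner \textbf{for} loop with Algorithm~\ref{alg:perturb} applied to the reformulated \ref{D*}, and use Lemma~\ref{cheunglp} to conclude that $\sum_i\epsilon^i D_i$ is an optimum of the perturbed \ref{D*}, whence $\mathscr{H}'$ (and thus $\F$) matches. Your explicit induction and attention to the bookkeeping of $L,M,N,Q$ versus $E,J$ are more detailed than the paper's compressed argument, but the logical skeleton is identical.

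One small caution: your appeal to ``the uniqueness guaranteed under perturbation'' for the dual is not justified as stated, since Lemma~\ref{cvvepsilon} only establishes uniqueness of the \emph{primal} optimum, not of the optimum of \ref{D*}. The paper avoids this by arguing only that $y=\sum_i\epsilon^i D_i$ is \emph{an} optimal solution to the LP that step~\ref{cvv:extremal} of Algorithm~\ref{alg:cvv} is permitted to return; the lemma should then be read as asserting that Algorithm~\ref{alg:new} simulates \emph{some} valid run of Algorithm~\ref{alg:cvv}. This is enough for the downstream theorem and corollary, and your argument goes through once you replace the uniqueness claim with this weaker (and sufficient) statement.
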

\begin{proof}
As mentioned in Section \ref{sec:cvv}, by \cite{schrijver_theory_2000}, the 
lexicographically-minimal unperturbed primal solution is equal to the 
unique perturbed optimal primal solution for a given $\F$, so we need only 
show that $\F$ is always equal to its counterpart.
	
Consider $\Gamma$ as a single vector of polynomials in $\epsilon$, with
the coefficients of the $\epsilon^i$ terms given by $\Gamma_i$.  Then, by Lemma
\ref{cheunglp}, $y=\sum_i \epsilon^iD_i$ is an optimal solution to the linear
program in step \ref{dualline}, and $y(S)>0$ if and only if $\max (D_i) > 0$.
But the linear program in question is exactly that which the C-P-Matching
algorithm uses to obtain a $\Gamma$-extremal dual optimal solution.  Therefore
$\mathscr{H}'$, which is defined solely based on whether $y(S)>0$ or not, is
equal to its counterpart in the C-P-Matching algorithm.  Since $\mathscr{H}''$
is defined exactly the same way as its counterpart, the two are equal, so $\F$
is equal to its counterpart as well.
\end{proof}

Since, by Lemma \ref{cvvepsilon}, neither the correctness nor the complexity of 
Algorithm \ref{alg:cvv} are affected by changing from the perturbation 
$c(i)=2^{-i}$ to the perturbation $c(i)=\epsilon^i$, we can rephrase this to 
give

\begin{theorem}
The Unperturbed C-P-Matching Algorithm gives a minimum-cost perfect matching.
\end{theorem}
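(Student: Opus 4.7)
The plan is to obtain the theorem almost immediately by chaining the two lemmas that precede it. By the lemma just stated, each iterate $x$ produced by Algorithm \ref{alg:new} coincides with the iterate $x$ produced by Algorithm \ref{alg:cvv} when the perturbation $c(i)=\epsilon^i$ is applied, for any sufficiently small $\epsilon > 0$. In particular, Algorithm \ref{alg:new} terminates in exactly the same iteration (when $x$ first becomes integral) as the perturbed version of Algorithm \ref{alg:cvv}, and the $x$ it returns agrees with the one Algorithm \ref{alg:cvv} returns.

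Next, by Lemma \ref{cvvepsilon}, there exists $\delta > 0$ such that for all $0 < \epsilon < \delta$, Algorithm \ref{alg:cvv} with the perturbation $c(i)=\epsilon^i$ retains the correctness guarantees proved by Chandrasekaran \textit{et al.}~\cite{chandrasekaran_cutting_2016}, namely that its output $x$ is the incidence vector of a perfect matching minimizing the perturbed cost $c_0 + \sum_i \epsilon^i \mathbf{1}_i$ over all perfect matchings. Combining with the previous paragraph, this is the same $x$ returned by Algorithm \ref{alg:new}.

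It remains to promote ``minimum under the perturbed cost'' to ``minimum under the original integer cost.'' Since the set of perfect matchings is finite, there is a strictly positive gap between the best and the second-best values of $c_0$ among perfect matchings (unless all perfect matchings are tied, in which case the claim is immediate). Choose $\epsilon$ small enough that the total perturbation $\sum_{i=1}^{|E|} \epsilon^i$ is smaller than this gap; then any perfect matching optimal for the perturbed cost must also be optimal for $c_0$. Shrinking $\delta$ from Lemma \ref{cvvepsilon} further if necessary, we conclude that the matching returned by Algorithm \ref{alg:new} is a minimum-cost perfect matching with respect to the original costs $c_0$.

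I do not anticipate a serious obstacle, since the essential combinatorial and numerical work has been absorbed into Lemma \ref{cvvepsilon} and the preceding lemma matching trajectories. The only mild subtlety is to make sure the same $\epsilon$ can be chosen to simultaneously (i) validate Lemma \ref{cvvepsilon}, (ii) support the ``sufficiently small $\epsilon$'' invocations inside Lemma \ref{cheunglp} used in each iteration of Algorithm \ref{alg:new}, and (iii) be smaller than the integer cost gap described above; but since each constraint yields only a positive lower bound on $1/\epsilon$, taking $\epsilon$ less than the minimum of the three (over the finitely many iterations of the algorithm) suffices.
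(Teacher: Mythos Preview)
Your proposal is correct and follows essentially the same approach as the paper: combine the preceding lemma (matching trajectories of Algorithms~\ref{alg:new} and~\ref{alg:cvv}) with Lemma~\ref{cvvepsilon} to inherit correctness from Chandrasekaran \textit{et al.} The paper's own proof is a one-line rephrasing and omits your third paragraph, since the correctness statement of Algorithm~\ref{alg:cvv} in~\cite{chandrasekaran_cutting_2016} is already with respect to the \emph{original} integer costs (the perturbation is internal); your gap argument is therefore redundant but harmless, and your closing remark about a single $\epsilon$ serving all three purposes is a careful point the paper leaves implicit.
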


The lemma also has the following

\begin{corollary}
\sloppy
	The Unperturbed C-P-Matching algorithm requires solving $O(mn \log n)$ linear programming problems in the worst case.
\end{corollary}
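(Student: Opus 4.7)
The plan is to combine the iteration count of the original C-P-Matching algorithm with the per-iteration cost of the new algorithm. By the preceding lemma, each outer iteration of Algorithm~\ref{alg:new} produces the same $x$ and the same $\F$ that Algorithm~\ref{alg:cvv} would produce with the perturbation $c(i) = \epsilon^i$. By Lemma~\ref{cvvepsilon}, the bound of Chandrasekaran \textit{et al.} applies equally well to this perturbation, so the outer \textbf{while} loop in Algorithm~\ref{alg:new} terminates after $O(n \log n)$ iterations.

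Next I would bound the number of LPs solved inside a single outer iteration. There are two sources. First, the lexicographically-minimal optimal solution to \ref{Pf} is obtained via Algorithm~\ref{alg:LexMinPrimal}, whose \textbf{for} loop runs once per variable of the LP. Since \ref{Pf} has one variable per edge, this contributes $m + 1 = O(m)$ LPs. Second, the \textbf{for} loop over $i \leftarrow 0$ to $|E|$ in Algorithm~\ref{alg:new} solves one instance of \ref{Di} at each step, contributing another $O(m)$ LPs. Summing, each outer iteration solves $O(m)$ linear programs.

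Multiplying the $O(n \log n)$ outer iterations by the $O(m)$ LPs per iteration yields the claimed bound of $O(mn \log n)$ LPs overall. The only subtlety worth checking is that nothing else inside the outer loop hides a further LP call: the computations of $\mathscr{H}'$, $\mathscr{H}''$, $\F_x$, and the auxiliary sets $L, M, N, Q$ are purely combinatorial updates on objects already computed, so they contribute no additional LP solves. No other obstacles arise, since we are merely accounting, using the lemma to inherit correctness and the CVV iteration bound.
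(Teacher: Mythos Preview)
Your proof is correct and follows essentially the same approach as the paper: you invoke the lemma to inherit the $O(n \log n)$ iteration bound from Chandrasekaran \textit{et al.}, then count $O(m)$ LPs per outer iteration and multiply. The paper states the per-iteration count as $2(m+1)$ without breaking it down, whereas you explicitly attribute $m+1$ LPs to Algorithm~\ref{alg:LexMinPrimal} and $m+1$ to the loop over \ref{Di}; your additional remark that the remaining combinatorial updates hide no further LP calls is a nice clarification but not strictly needed.
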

\begin{proof}
According to \cite[Theorem 1]{chandrasekaran_cutting_2016}, the
C-P-Matching Algorithm takes at most $O(n \log n)$ iterations.  The Unperturbed
C-P-Matching Algorithm has the same number of iterations, but each iteration
utilizes $2(m+1)$ linear programming problems, which is $O(m)$.  Therefore, the
Unperturbed C-P-Matching Algorithm requires solving $O(m) \times O(n \log n) =
O(mn \log n)$ linear programming problems in total.
\end{proof}

\section{Final remarks}
We have developed a general method for solving perturbed linear programs in 
polynomial time without performing explicitly perturbed calculations, and 
demonstrated that it applies to the minimum-cost perfect matching problem.  The 
use of perturbations to guarantee uniqueness is common in linear programming, 
and it remains to be seen whether or not our method could be applied to other 
algorithms or used to solve other problems.  We do not yet know if our new 
algorithm, when properly implemented and optimized, can be made competitive 
with combinatorial methods such as Edmonds's blossom algorithm.  According to 
\cite{kolmogorov_blossom_2009}, the best known asymptotic runtime for such an 
implementation is $O(n(m + \log n))$.  Our algorithm, which solves $O(mn \log 
n)$ linear programs, each of which requires the use of a theoretically 
polynomial-time solver, is significantly slower in the worst case.

We encountered a number of interesting phenomena regarding the subroutine for
finding the lexicographically-minimal primal optimal solution.  Although, as
written, 
it requires solving a fixed number of linear programs ($|E|+1$), we noticed in 
empirical testing that it often gave this solution far more quickly than that, 
with the last few linear programs all giving the same answer.  We did not 
investigate this any further, but hypothesize that shortcuts exist
% that a better understanding of why it happened could potentially be used 
to decrease the runtime by a factor 
of $\frac{1}{4}$ or more.

\section*{Acknowledgements}
The authors would like to thank James Addis, as well as the organizers and 
supporting organizations of the Fields Undergraduate Summer Research Program 
--- Brittany Camp, Bryan Eelhart, Huaxiong Huang, Michael McCulloch, and the 
Fields Institute and Fields Centre for Quantitative Analysis and Modelling 
(Fields CQAM) --- without whom this research would not have been possible.

%
% ---- Bibliography ----
%
% BibTeX users should specify bibliography style 'splncs04'.
% References will then be sorted and formatted in the correct style.
%
\bibliographystyle{splncs04}
\bibliography{bibliography}

\end{document}